\documentclass[11pt,reqno]{amsart}
\usepackage{amsmath}
\usepackage{amssymb}
\usepackage{amsthm}
\usepackage{amsrefs}
\usepackage{amsfonts}
\usepackage[dvipsnames]{xcolor}
\usepackage{mathtools}
\usepackage{mathrsfs}
\usepackage{extarrows}
\usepackage{xcolor}
\usepackage{framed}
\newif\ifrevmarks
\revmarkstrue
\ifrevmarks
  
  \newenvironment{revised}%
    {%
     \MakeFramed{\FrameRestore}}%
    {\endMakeFramed}
\else

\fi
\usepackage{geometry}
\usepackage{hyperref}
\usepackage{fancyhdr}
\usepackage{parskip}
\newtheorem{thm}{Theorem}[section]
\newtheorem{prop}[thm]{Proposition}
\newtheorem{lem}[thm]{Lemma}

\theoremstyle{remark}

\newcommand{\SO}{\operatorname{SO}}

\newcommand{\cX}{\mathcal{X}}

\newcommand{\cV}{\mathcal{V}}
\newcommand{\Q}{\mathbb{Q}}
\newcommand{\bR}{\mathbb{R}}
\newcommand{\bZ}{\mathbb{Z}}
\newcommand{\ra}{\rightarrow}
\newcommand{\eps}{\varepsilon}
\newcommand{\bT}{\textit{\textbf{T}}}
\DeclareMathOperator{\Wei}{Wei}
\DeclareMathOperator{\id}{Id}
\DeclareMathOperator{\Poi}{Poi}

\newcommand{\qand}{\quad \textrm{and} \quad}

\DeclareMathOperator{\supp}{supp}

\begin{document}

\title[Extremes for intrinsic Diophantine approximation on spheres]{Extreme value laws for intrinsic Diophantine approximation on spheres}

\author{Alexander Gorodnik}
\address{Institut f\"ur Mathematik, Universit\"at Z\"urich, 8057 Z\"urich, Switzerland}
\email{alexander.gorodnik@math.uzh.ch}

\author{Zouhair Ouaggag}
\address{Institut f\"ur Mathematik, Universit\"at Z\"urich, 8057 Z\"urich, Switzerland}
\email{zouhair.ouaggag@math.uzh.ch}

\subjclass[2020]{Primary 11J83, 37A44; Secondary 22E40, 37A17, 60G70}

\begin{abstract}
    We study intrinsic Diophantine approximation on the sphere $S^d$, measured by
    the quality of the best rational approximation available at a given height. We
    show that, for $d\geq 3$ and along sufficiently sparse sequences of heights, these quantities have a
    Weibull limit distribution and the associated counting functions are
    asymptotically Poissonian.
\end{abstract}

\maketitle

\section{Introduction}

Let $d\geq 1$ and let $S^d\subset \bR^{d+1}$ denote the unit sphere. This paper
concerns \emph{intrinsic} Diophantine approximation on $S^d$, that is, the
approximation of points of $S^d$ by rational points on the sphere $S^d$.
Kleinbock and Merrill \cite{kleinbock2013rational}*{Theorem 4.1} established an
analogue of Dirichlet's Theorem in this setting: there is a constant $\kappa\geq 1$, depending only on $d$,
such that for every $\alpha\in S^d$ and every $N>1$ there exist $p\in\bZ^{d+1}$ and $q\in\mathbb{N}$ such that
\begin{equation}\label{dirichlet}
    \Big\lVert \alpha-\frac{p}{~q~} \Big\rVert<\frac{\kappa}{~\sqrt{qN}~},
    \qquad  \frac{\;p\;}{q}\in S^d,
    \qquad q\leq N.
\end{equation}
It is therefore natural to measure the quality of the best approximation
available at a given height by the minima
\begin{equation}\label{def kT}
    k_{t}(\alpha):=\min\Big\{ \sqrt{q\,e^t}\,\Big\lVert \alpha-\frac{p}{~q~} \Big\rVert\; : \:
    p\in \bZ^{d+1},\; q\in\mathbb{N},\; \frac{\;p\;}{q}\in S^d,\; q< e^t  \Big\},
    \qquad \alpha\in S^d .
\end{equation}
We use throughout the convention that a minimum over the empty set is $+\infty$.
By \eqref{dirichlet},
\begin{equation}\label{eq:dir2}
\kappa_d:=\sup\big\{k_t(\alpha)\;:\;\alpha\in S^d,\; t> 0\big\}<\infty .
\end{equation}

The function $t\mapsto k_t(\alpha)$ fluctuates as $t\to\infty$, and our aim is to
describe the distribution of its minima along sparse sets of times. For a finite
subset $\bT$ of $\bR_+$ we put
\begin{equation}\label{minimum kT tilde}
   \widetilde{k}_{\bT}(\alpha) := \min_{t \in \bT} \, k_t(\alpha).
\end{equation}
We show that, along sufficiently sparse sequences $(\bT_n)$ of sampling sets,
the suitably normalised minima $\widetilde k_{\bT_n}$ have a nondegenerate limit
distribution with respect to the rotation-invariant probability measure
$\mu_{_{S^d}}$ on $S^d$. Recall that the \emph{Weibull distribution}
$\Wei(\lambda,a)$ with scale $\lambda>0$ and shape $a>0$ is the distribution on
$[0,\infty)$ with distribution function $W_{\lambda,a}(\xi):=1-\exp\big(-(\xi/\lambda)^a\big)$.

\begin{thm}\label{th:weibull}
	Let $d\geq 3$. There exists $m_d>0$, depending only on $d$, such that for every sequence
	$(\bT_n)$ of finite subsets of $\bR_+$ with
	\begin{equation}\label{eq:sparse}
	|\bT_n|\to \infty \qand
 \frac{\displaystyle \min_{t\ne t'\in \bT_n} \left\{t,t',|t-t'| \right\}}{\log |\bT_n|} \to \infty,
	\end{equation}
	we have
	\[
	|\bT_n|^{1/d} \cdot \widetilde k_{\bT_n} \xLongrightarrow[\mu_{_{S^d}}]{} \Wei\big(m_d^{-1/d},d\big).
	\]
\end{thm}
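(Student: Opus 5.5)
We will deduce the theorem from an extreme-value statement for a homogeneous flow, via the Kleinbock–Merrill dynamical correspondence. Write $S^d$ as the projectivised null cone of the quadratic form $Q(x)=x_1^2+\dots+x_{d+1}^2-x_{d+2}^2$. Rational points $p/q\in S^d$ then correspond to primitive integer null vectors $(p,q)\in\bZ^{d+2}$. Let $G=\SO(Q)^\circ$, $\Gamma=G\cap \operatorname{SL}_{d+2}(\bZ)$, and let $a_t\in G$ be the one-parameter diagonal subgroup contracting the $x_{d+2}$-direction. For $\alpha\in S^d$ choose $u_\alpha\in G$ mapping the base null vector to $(\alpha,1)$, depending smoothly on $\alpha$.

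The first step is the \emph{translation}. A direct computation should show that
\[
k_t(\alpha)\le \xi \iff a_t u_\alpha^{-1}\Gamma \in \{x: \Delta_x(\xi)\ne\emptyset\}=: \mathcal{E}_\xi ,
\]
up to negligible boundary effects. Here $\Delta_x(\xi)$ is the set of primitive null vectors of the lattice $x\bZ^{d+2}$ lying in an explicit bounded region $R_\xi$. This region is a thin cylinder with cross-section of radius $\asymp\xi$ around the expanding null direction, with height between $0$ and $1$ (corresponding to $q<e^t$).

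We can then write $\widetilde k_{\bT}(\alpha)>\xi$ as $\sum_{t\in\bT}\chi_{\mathcal{E}_\xi}(a_t u_\alpha^{-1}\Gamma)=0$. By Siegel's mean value theorem for the null cone (a Siegel-type formula for $G/\Gamma$), $\mu(\mathcal{E}_\xi)\sim m_d\,\xi^d$ as $\xi\to0$. This holds because $R_\xi$ has cone-measure $\asymp \xi^{d}$: the $d$ transversal directions on $S^d$ each have size $\xi$, the height factor scales out, and the null cone has dimension $d+1$. The same scaling holds for the probability that at least two primitive points lie in $R_\xi$, which is $o(\xi^d)$. Setting $\xi=s|\bT_n|^{-1/d}$, each event has probability $\approx m_d s^d/|\bT_n|$, and the expected total count is $m_d s^d$. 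The target limit $\exp(-m_d s^d)$ is exactly $1-W_{m_d^{-1/d},d}(s)$.

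The second step is Poisson convergence via the \emph{method of cumulants/moments}, following Björklund–Gorodnik. Smooth $\chi_{\mathcal{E}_\xi}$ from above and below by compactly supported functions $f^\pm_\eps$ (Siegel transforms of smoothed indicators of $R_\xi$, truncated in the cusp). The correlations $\int \prod_{i=1}^r f(a_{t_i}u_\alpha^{-1}\Gamma)\,d\mu(\alpha)$ are then controlled by quantitative equidistribution of expanding translates of the $u$-orbit (a horospherical/sphere orbit). We also need exponential multiple mixing of $a_t$ on $G/\Gamma$ with Sobolev norms, which follows from the spectral gap of $\SO(d+1,1)$.

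Multiple mixing yields factorisation of $r$-fold correlations with error $e^{-\delta\min\{t_i,|t_i-t_j|\}}\cdot\prod\|f\|_{C^k}$. The Sobolev norms grow like $\xi^{-O(1)}\eps^{-O(1)}$ and like a cusp truncation parameter, so they are polynomial in $|\bT_n|$. Hypothesis \eqref{eq:sparse} makes $e^{-\delta\min}$ beat any power of $|\bT_n|$. This gives the factorial moments of the count converging to $(m_d s^d)^r$, and hence a Poisson limit. The Weibull law follows by letting the smoothing parameter go to $0$.

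The main obstacle is the \emph{cusp and non-compact support}. The functions $\chi_{\mathcal{E}_\xi}$ are unbounded Siegel-type objects, so to apply mixing one must truncate at height $L$. Controlling the mass lost requires $L^p$ bounds on Siegel transforms, i.e. a Rogers-type higher moment formula on $\Gamma\backslash G$ for null vectors. This is where $d\ge3$ enters: for $\SO(d+1,1)$ with $d\ge3$ the relevant second moments converge and the count of $\ge2$ points in $R_\xi$ is $o(\xi^d)$, whereas for small $d$ these integrals diverge. I would first establish $\|\widehat{\chi}_{R_\xi}\|_{L^2}^2 = m_d\xi^d+O(\xi^{d+\eta})$ and a tail bound $\mu(\text{height}>L)\ll L^{-\beta}$. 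I would then choose $L=|\bT_n|^{c}$ and $\eps=|\bT_n|^{-c'}$ so that all errors vanish.
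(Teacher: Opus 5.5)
Your architecture is the paper's, namely the Bj\"orklund--Gorodnik scheme:
\begin{itemize}
\item encode $k_t(\alpha)$ as a hitting event for $a_t\Lambda_\alpha$ and a shrinking region of the cone;
\item compute the volume of the hitting set;
\item sandwich its indicator between smooth functions with $C^q$-norms polynomial in $|\bT_n|$;
\item apply effective multiple equidistribution of $a_t$-translates of $\nu$ (Theorem~\ref{thm:equidistribution});
\item let \eqref{eq:sparse} absorb the losses, and finish by the method of moments.
\end{itemize}
The difference is in the cusp. You stay at the natural scale $R_\xi\approx\mathcal{C}(\xi^2/2,2)$ and get $\mu(\mathcal{E}_\xi)$ from Siegel's mean value formula on the cone. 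The paper instead imposes $L_-<\sigma_0$ with $\sigma_0$ small and passes between scales via Lemma~\ref{lem:compare} and the time shifts of Lemmas~\ref{lem:tilde kt tilde etaT} and~\ref{lem:poisson tilde kt tilde etaT}. This is also how your ``negligible boundary effects'' are made rigorous, since the shifts preserve \eqref{eq:sparse}. For $\tau,\sigma<\theta_1$ the set $\Omega(\tau,\sigma)$ lies in the Siegel-set cusp regions of Proposition~\ref{prop:reduction}. There only multiples of $c_ie^{-t}ke_0$ can lie in $\mathcal{C}(\tau,\sigma)$ (Lemma~\ref{lem:small vectors}), which gives the exact formula $\mu(\Omega(\tau,\sigma))=C_0\,\mu_{\cV}(\mathcal{C}(\tau,\sigma))$ of Lemma~\ref{lem:volume Omega}. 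The bounded indicator is then smoothed directly as $\rho_\eps*\chi_{\Omega^\pm}$, using Lemma~\ref{lem:Weps}. No Siegel transforms, truncation or moment formulas appear. Your route avoids precise reduction theory and $\sigma_0$, and it is legitimate. Indeed $a_s\Omega(\tau,\sigma)=\Omega(e^s\tau,e^{-s}\sigma)$, so by $A$-invariance $\mu(\Omega(\tau,\sigma))$ depends only on $\tau\sigma$. This is why the powers of $\sigma_0$ cancel in the paper.

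However, what you call the main obstacle is not one, and your account of where $d\ge3$ enters is wrong.

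\emph{Multiplicity is vacuous.} Let $B$ be the bilinear form of $Q$. If $v,w\in g\cV(\bZ)$ are not proportional, then $B(v,w)=B(g^{-1}v,g^{-1}w)$ is a negative integer, by the reverse Cauchy--Schwarz inequality. On the other hand, for $v,w\in\mathcal{C}(\tau,\sigma)$,
\[
B(v,w)=\sum_{i\le d}v_iw_i-\tfrac12\big(L_+(v)L_-(w)+L_-(v)L_+(w)\big),
\]
so $|B(v,w)|<2\tau\sigma$. Hence for $\xi$ small every lattice has at most one primitive vector in $R_\xi$. Then $\chi_{\mathcal{E}_\xi}$ \emph{equals} the primitive Siegel transform of $\chi_{R_\xi}$, and Siegel's formula makes $\mu(\mathcal{E}_\xi)$ exactly proportional to $\mu_{\cV}(R_\xi)$. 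This holds in every dimension, with no second-moment or Rogers-type input.

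\emph{Nothing needs truncating.} $\chi_{\mathcal{E}_\xi}$ is bounded, so beyond the routine extension of the equidistribution estimate to bounded smooth functions there is no cusp issue.

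\emph{The source of $d\ge3$.} In the paper, $d\ge3$ is used only through Theorem~\ref{thm:equidistribution}, the effective multiple equidistribution of $\nu$. Your sketch treats that result as a routine consequence of spectral gap and multiple mixing. It is the real black box and should be named as such.
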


Theorem \ref{th:weibull} is a consequence of a finer result, which describes the
number of times at which $k_t$ is small. For a finite subset $\bT$
of $\bR_+$ and $\tau>0$ we consider the counting function
\begin{equation}\label{def K}
\mathcal{K}_{\bT}(\alpha,\tau):= \#\Big\{t\in \bT \; : \; k_t(\alpha)<\tau \Big\}.
\end{equation}
We show that along sparse sequences $(\bT_n)$ this counting function is asymptotically
Poissonian.

\begin{thm}\label{th:poisson}
Let $d\geq 3$ and let $m_d>0$ be as in Theorem \ref{th:weibull}. For every sequence $(\bT_n)$ of
finite subsets of $\bR_+$ satisfying \eqref{eq:sparse} and every $\tau> 0$,
$$
   \mathcal{K}_{\bT_n}\big(\cdot\,,|\bT_n|^{-1/d} \tau\big)\xLongrightarrow[\mu_{_{S^d}}]{} \Poi(m_d\tau^d),
$$
where $\Poi(m)$ denotes the Poisson distribution with mean $m$.
\end{thm}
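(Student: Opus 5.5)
We will prove Theorem \ref{th:poisson} by the method of moments, or more precisely factorial moments. Convergence of all factorial moments $\mathbb{E}[\binom{\mathcal K}{r}]$ to $(m_d\tau^d)^r/r!$ implies convergence to $\Poi(m_d\tau^d)$.

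The first step is a dynamical reformulation. Rational points on $S^d$ correspond to primitive isotropic vectors $(p,q)$ of the quadratic form $Q(x)=x_1^2+\dots+x_{d+1}^2-x_{d+2}^2$. A point $\alpha\in S^d$ determines an element $h_\alpha\in\SO(Q)(\bR)$, and the quantity $\sqrt{qe^t}\,\lVert\alpha-p/q\rVert$ with $q<e^t$ becomes the statement that the lattice $a_t h_\alpha\Gamma\cdot \bZ^{d+2}$ has a primitive isotropic vector in an explicit compact region $R_\tau\subset\bR^{d+2}$. Here $\Gamma=\SO(Q)(\bZ)$ and $a_t$ is the diagonal flow expanding the light-cone direction of $(\alpha,1)$. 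Hence $\{k_t(\alpha)<\tau\}=\{a_th_\alpha\Gamma\in\mathcal A_\tau\}$, where $\mathcal A_\tau=\{g\Gamma: g\mathcal P\cap R_\tau\neq\emptyset\}$ and $\mathcal P$ is the set of primitive isotropic integer vectors. For small $\tau$ the region $R_\tau$ is thin, with volume of order $\tau^d$ on the cone measure, so at most one vector lies in it. A Siegel-type mean value formula on $G/\Gamma$ for the cone (valid for $d\ge3$, where the isotropic lattice points equidistribute) then gives $\mu(\mathcal A_\tau)\sim m_d\tau^d$ as $\tau\to0$. This defines $m_d$.

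The second step is equidistribution. The expanding sphere translates $a_t\{h_\alpha\}\Gamma$ equidistribute in $G/\Gamma$ with an effective rate $e^{-\delta t}$ for smooth test functions; this uses the spectral gap, which is uniform for $d\ge3$. For the $r$-th factorial moment we need the joint statistics
\[
\int_{S^d}\prod_{i=1}^r \chi_{\mathcal A_{\tau_n}}(a_{t_i}h_\alpha\Gamma)\,d\mu_{_{S^d}}(\alpha),\qquad t_1<\dots<t_r\in\bT_n,\quad \tau_n=|\bT_n|^{-1/d}\tau .
\]
We will use effective multiple equidistribution: the integral equals $\prod_i\mu(\mathcal A_{\tau_n})$ up to an error $C\,\mathrm{Sob}\cdot e^{-\delta\min\{t_1,\,t_{i+1}-t_i\}}$. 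This is proved inductively through exponential mixing of higher order for $a_t$ combined with equidistribution of the sphere orbit.

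Characteristic functions are approximated by smooth functions $f^\pm$ at scale $\eps$. This costs a Sobolev norm $\eps^{-L}$ and a measure error $\eps\,\tau_n^{d-1}$ from the boundary of $\mathcal A_\tau$. The boundary is handled via the same Siegel-type estimate, and there is also a contribution from points with two vectors near $R_\tau$, which is $O(\tau^{2d})$ and negligible. Summing over the $\binom{|\bT_n|}{r}$ tuples, the main term is
\[
\binom{|\bT_n|}{r}(m_d\tau^d/|\bT_n|)^r\to (m_d\tau^d)^r/r! .
\]
The total error is at most $|\bT_n|^{r}\eps^{-L}e^{-\delta s_n}+|\bT_n|^r\eps\,\tau_n^{d(r-1)+d-1}$, where $s_n=\min\{t,t',|t-t'|\}$. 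Choosing $\eps=|\bT_n|^{-A}$ with $A$ large and using $s_n/\log|\bT_n|\to\infty$ makes both terms vanish.

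The main obstacle is the effective multiple equidistribution of sphere translates with an error polynomial in Sobolev norms, together with the smoothing estimates. Since $\mathcal A_\tau$ is a noncompact cusp-type set, we need the Siegel transform approximation to be controlled in $L^2$. This is where $d\ge3$ is essential, as it ensures integrability of the squared Siegel transform. Finally, Theorem \ref{th:weibull} follows from $\mu(\widetilde k_{\bT_n}\ge|\bT_n|^{-1/d}\xi)=\mu(\mathcal K=0)\to e^{-m_d\xi^d}$.
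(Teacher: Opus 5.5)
Your skeleton matches the paper's: factorial moments (the paper's Proposition~\ref{prop:poisson criterion}), hitting-set indicators smoothed by convolution on $G$ at a scale that is a negative power of $|\bT_n|$, effective multiple equidistribution of the $a_t$-translates of $\nu$ (imported from \cite{ouaggag2024clt} as Theorem~\ref{thm:equidistribution}), and sparseness to kill the error $|\bT_n|^{O(r)}e^{-\beta_r\widetilde D_r(\bT_n)}$. The gap is in the step the paper treats as one of its main new ingredients, namely the measure of the target. You obtain $\mu(\mathcal A_\tau)\sim m_d\tau^d$ from a Siegel mean value formula together with the claim that $R_\tau$, being thin, contains at most one lattice vector. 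That inference is invalid. A small value of $\mu_{\cV}(R_\tau)$ says that most lattices miss $R_\tau$; it does not say that a lattice meeting $R_\tau$ meets it only once. The mean value formula by itself only gives $\mu(\mathcal A_\tau)\le\int\widehat{\chi}_{R_\tau}\,d\mu$. Your fallback bound $O(\tau^{2d})$ for double hits would require a second-moment formula for light-cone Siegel transforms, which you neither state nor prove.

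The paper closes this step by making the $L_-$-cutoff $\sigma_0$ small. Then $\mathcal C(\tau,\sigma_0)$ lies in a small ball, so any lattice meeting it is deep in a cusp. Reduction theory (Proposition~\ref{prop:reduction}) and Lemma~\ref{lem:small vectors}, which uses integrality of $L_+(g_iz)$, show that only the rational line $g_i^{-1}\bR e_0$ contributes. This gives the exact identity of Lemma~\ref{lem:volume Omega}, and it is why $\sigma_0$ and the time-shifted comparison Lemma~\ref{lem:compare} appear.

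Within your framework there is a cheaper repair that even avoids $\sigma_0$. Let $B$ be the bilinear form of $Q$. On $\cV$ one has $B(x,y)=\sum_{i\le d}x_iy_i-\tfrac12\big(L_+(x)L_-(y)+L_-(x)L_+(y)\big)$, so $|B(x,y)|<2\tau\sigma$ for $x,y\in\mathcal C(\tau,\sigma)$. On the other hand, $B(z,w)$ is a nonzero integer for non-proportional $z,w\in\cV(\bZ)$, by Cauchy--Schwarz in the first $d+1$ coordinates. Hence for $\tau\sigma\le 1/2$ every lattice meets $\mathcal C(\tau,\sigma)$ in at most one ray. The indicator of $\Omega(\tau,\sigma)$ is then the primitive Siegel transform of $\chi_{\mathcal C(\tau,\sigma)}$. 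Unfolding over the $\Gamma$-orbits of the $w_i$ of \eqref{eq:ci} gives $\mu(\Omega(\tau,\sigma))=C_0\,\mu_{\cV}(\mathcal C(\tau,\sigma))$ with the paper's $C_0$, for every $d$ and also for $\sigma=2$. In your normalisation this yields $m_d=C_0C_d$, as in the paper.

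Two smaller corrections.
\begin{itemize}
\item The height condition $q<e^t$ becomes $L_-(z)<2-e^{-2t}L_+(z)$, so the target depends on $t$ and is not a fixed $R_\tau$. It is, however, sandwiched between $\mathcal C(\tau^2/2,\,2-e^{-2t}\tau^2/2)$ and $\mathcal C(\tau^2/2,\,2)$. The discrepancy is absorbed by the smoothing, since $e^{-2\min\bT_n}$ is smaller than any power of $|\bT_n|$.
\item The hypothesis $d\ge 3$ has nothing to do with $L^2$-integrability of Siegel transforms. The smoothed test functions are bounded by $1$, and the mean value formula holds for all $d$. The hypothesis enters only through Theorem~\ref{thm:equidistribution}.
\end{itemize}
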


Theorem \ref{th:weibull} also follows directly from Theorem \ref{th:poisson}: since
$$
\Big\{\widetilde k_{\bT_n}<\tau\Big\} =\bigcup_{t\in \bT_n}\Big\{k_t<\tau\Big\}
=\Big\{\mathcal{K}_{\bT_n}(\cdot,\tau)>0\Big\},
$$
taking $\tau=|\bT_n|^{-1/d}\xi$ in this identity and applying
Theorem \ref{th:poisson} gives, for every $\xi>0$,
$$
\mu_{_{S^d}}\Big(\Big\{|\bT_n|^{1/d}\widetilde k_{\bT_n}<\xi\Big\}\Big)
=\mu_{_{S^d}}\Big(\Big\{\mathcal{K}_{\bT_n}\big(\cdot,|\bT_n|^{-1/d}\xi\big)>0\Big\}\Big)
\longrightarrow 1-e^{-m_d\xi^d}.
$$

The connection between homogeneous dynamics and Diophantine approximation goes back at least to the work of Sullivan
\cite{Sullivan1982}, where the interplay between cuspidal excursions
of geodesics on hyperbolic manifolds and Diophantine approximation was explored;
in particular, Khintchine's theorem was related to the logarithm law for geodesics.
This profound connection was further developed by 
Dani \cite{Dani1985} and Kleinbock and Margulis \cite{KleinbockMargulis1998},
\cite{Kleinbock_1999}. 
The distributional refinement of such geodesic excursion results,
which is our concern here, goes back to Pollicott \cite{Pollicott2009}, who
proved an extreme-value law for cusp excursions of the geodesic flow on the
modular surface.
In the setting of the sphere, the relevant correspondence---an analogue of
Dani's correspondence \cite{Dani1985} relating good rational approximants of $\alpha\in S^d$ to
short vectors of an associated lattice on the light cone in $\bR^{d+2}$---is
implicit in \cite{drutu2005} and explicit in \cite{kleinbock2013rational}.
We also refer to subsequent works
\cite{alam2020quantitative}, \cite{KelmerYu2023}, \cite{ouaggag2023effective},
\cite{ouaggag2024clt}.
Poisson limit laws in dynamics go back to Doeblin \cite{Doeblin1940}, who
determined the limiting distribution of the return times of the Gauss map to
shrinking neighbourhoods of the origin; the subject was taken up again by Pitskel \cite{Pitskel1991} and Hirata
\cite{Hirata_1993}.
Poisson laws on the space of lattices, and the limit laws for the associated
arithmetic functions, were established by Dolgopyat, Fayad and Liu
\cite{dolgopyat2021multipleborelcantellilemma} and by Bj\"orklund and Gorodnik
\cite{WeibullBG}.

We follow the approach of \cite{WeibullBG}, but its implementation in the
present setting requires new ingredients. The first is the dynamical
correspondence developed in Section \ref{sec:correspondence}, which expresses the
Diophantine extremes $k_t(\alpha)$ in terms of the space of orthogonal lattices
and involves an auxiliary parameter $\sigma_0$, whose appearance is somewhat
surprising: it does not enter the statements of our results and its powers cancel
at the end of the proof. The second is a different
treatment of the neighbourhoods of the cusps, carried out in Section
\ref{sec:volume} by means of precise reduction theory. Finally, the proof
rests on the effective multiple equidistribution in the space of orthogonal
lattices established by the second author in \cite{ouaggag2024clt}. This last
ingredient is the source of the assumption $d\geq 3$ in Theorems \ref{th:weibull}
and \ref{th:poisson}, under which the equidistribution estimate of
\cite{ouaggag2024clt} is proved; the other arguments of the paper are valid for
all $d\geq 1$.

\subsection*{Organisation of the paper}
Section \ref{sec:notation} fixes the notation. In Section \ref{sec:correspondence}
we set up the correspondence between good
rational approximants on $S^d$ and short vectors of lattices on the light cone,
and show (Lemmas \ref{lem:tilde kt tilde etaT} and
\ref{lem:poisson tilde kt tilde etaT}) that the minima $\widetilde k_{\bT_n}$ and
the corresponding lattice minima have related limit distributions along sparse
sequences, and similarly for the associated counting functions. Section \ref{sec:volume} contains the volume estimates for the
shrinking targets on the cone and computes, by means of precise reduction theory,
the volume of the associated neighbourhoods of infinity in the space of lattices.
In Section \ref{sec:Poisson}
we combine these estimates with effective multiple equidistribution to prove
Theorems \ref{th:weibull} and \ref{th:poisson}.

\subsection*{Acknowledgements}
This research was supported by the SNF grant 200020--212617.
The draft of the paper was proofread and polished by Claude.

\section{Notation}\label{sec:notation}

We consider the quadratic form
\begin{equation*}
Q(x) := \sum_{i=1}^{d+1} x_i^{2} - x_{d+2}^{2}, \quad \text{ for } x= (x_1, \dots , x_{d+2}),
\end{equation*}
and the corresponding positive light cone
$$
\mathcal{V} := \{  x \in \mathbb{R}^{d+2} : Q(x)=0,~ x_{d+2} > 0 \}.
$$
Let
$$
G=\SO(Q)^\circ \cong \SO(d+1,1)^\circ
$$
be the identity component of the group of linear transformations of $\mathbb{R}^{d+2}$ preserving $Q$.
Note that $G$ acts transitively on $\mathcal{V}$.
We write
$$
\mathcal{V}(\mathbb{Z}):=\mathcal{V}\cap\mathbb{Z}^{d+2}
=\Big\{(p,q)\in \bZ^{d+1}\times\mathbb{N} \;:\; \frac{\;p\;}{q}\in S^d\Big\},
$$
the set of rational points occurring in \eqref{def kT}.
We call the sets $g\mathcal{V}(\mathbb{Z})$, $g\in G$, \emph{lattices} in $\mathcal{V}$.
Denote by $\Gamma$ the stabiliser of $\mathcal{V}(\mathbb{Z})$ in $G$. Then $\Gamma$ contains the group $\SO(Q)^\circ(\bZ)$ of integral points of $G$ as a subgroup of finite index, and is in particular a lattice in $G$.
Therefore, the $G$-orbit of $\mathcal{V}(\mathbb{Z})$ is identified with
$$
\mathcal{X} :=  G / \Gamma,
$$
which we endow with the unique $G$-invariant probability measure $\mu$. We
denote by $\mu_G$ the Haar measure of $G$, normalised compatibly with $\mu$.

Let $K$ denote the stabiliser of the last basis vector $e_{d+2}$ in $G$, a maximal
compact subgroup of $G$; it consists of the matrices
$$
\begin{pmatrix}
k &  \\
 & 1
\end{pmatrix},\qquad k\in\SO(d+1),
$$
so that $K\cong\SO(d+1)$, and we equip it with the invariant probability measure $\mu_K$.
We set $e_0:=(0,\dots,0,1,1)\in\cV$ and $M:=\operatorname{Stab}_K(e_0)\cong \SO(d)$; the
sphere $S^d$ can then be realised as the quotient $K/M$.

We embed the sphere $S^{d}$ in $\mathcal{V}$
via the map $S^d\rightarrow \mathcal{V},\;\alpha \mapsto (\alpha,1)$.
Under this identification, good rational approximations of a point
$\alpha\in S^d$ correspond to integer points $(p,q) \in \mathcal{V}(\mathbb{Z})$
lying close to the ray through $(\alpha,1)$ on the light cone $\mathcal{V}$.
For
$\alpha\ne\pm e_{d+1}$ we denote by $k_\alpha\in K$ the rotation which maps $\alpha$ to
$e_{d+1}$ in the plane spanned by these two vectors and is the identity on the
orthogonal complement of this plane, so that
\begin{equation}\label{def k alpha}
    k_{\alpha}(\alpha,1)=e_0
\end{equation}
(for $\alpha=\pm e_{d+1}$ we make an arbitrary choice of $k_\alpha\in K$
satisfying \eqref{def k alpha}). We introduce the lattice
\begin{equation}
\label{def Lambda alpha}
\Lambda_\alpha:=  k_\alpha \mathcal{V}(\mathbb{Z})\in \cX.
\end{equation}
We denote by $\nu$ the probability measure on $\mathcal{X}$, defined by
\[
\int_{\mathcal{X}} \varphi \,   d\nu = \int_{S^d} \varphi(\Lambda_\alpha) \,  d\mu_{_{S^d}}(\alpha)
,  \quad \textrm{for $\varphi \in C_b(\mathcal{X})$}.
\]
We also consider the subgroup
$$
A:=\left\{a_t =  \begin{pmatrix}
I_d & & \\
 &  \cosh t & -\sinh t \\
 & -\sinh t & \cosh t \end{pmatrix} : t\in \bR\right\} \subset G,
$$
which contracts the line spanned by $e_0$ as $a_te_0=e^{-t}e_0$, and the maximal
unipotent subgroup of $G$ fixing $e_0$,
$$
U:=\left\{u_{x}=\left(\begin{matrix} I_d & -x & x\\
 x^t&1-\frac{1}{2}\lVert x \rVert^2  &\frac{1}{2}\lVert x \rVert^2 \\
 x^t& -\frac{1}{2}\lVert x \rVert^2& 1+\frac{1}{2}\lVert x \rVert^2\end{matrix}\right): x\in \mathbb{R}^d\right\}.
$$
Then $G=KAU$ is an Iwasawa decomposition, and $P:=MAU$ is the parabolic
subgroup of $G$ stabilising the line $\bR e_0$. Every $g\in G$ can be written
uniquely as $g=ka_tu_{x}$ with $k\in K$, $t\in\bR$ and $x\in\bR^{d}$, and in these
coordinates
\begin{equation}\label{decomposition Haar muG}
d\mu_G(g)=c_G\,e^{-dt}\, dx\, dt\, d\mu_K(k)
\end{equation}
for some constant $c_G>0$, where $dx$ and $dt$ denote the Lebesgue measures on $\mathbb{R}^d$ and $\mathbb{R}$.

The stabiliser of $e_0$ in $G$ is $MU$, and the cone $\mathcal{V}\cong G/MU$
carries a $G$-invariant measure $\mu_{\mathcal{V}}$, unique up to scaling. Every
$x\in\mathcal{V}$ can be written uniquely as $x=ka_te_0=e^{-t}ke_0$ with $kM\in K/M$
and $t\in\bR$, and we normalise $\mu_{\mathcal{V}}$ so that in these polar coordinates
\begin{equation}\label{decomposition muV}
d\mu_{\mathcal{V}}(ka_te_0)=e^{-dt}\,dt\,d\mu_{K/M}(kM),
\end{equation}
where $\mu_{K/M}$ denotes the $K$-invariant probability measure on $K/M$.

Finally, we introduce the linear forms
$$
L_+(x):=x_{d+2}- x_{d+1}\qand L_-(x):=x_{d+2}+ x_{d+1}.
$$
Note that
\begin{equation}
L_\pm\ge 0\;\;\hbox{ on $\cV$},\quad\quad
    L_+(a_t x)=e^{t}L_+(x), \quad\quad L_-(a_t x)=e^{-t}L_-(x),\label{eq:scalling L_pm by a_t}
\end{equation}
and that $L_+(x)=0$ if and only if $x$ is a positive multiple of $e_0$.
For $x\in \cV$, $\sum_{i=1}^{d}x_i^2=L_+(x)L_-(x)$,
$x_{d+2}=\tfrac{1}{2}\big(L_+(x)+L_-(x)\big)$ and
$x_{d+1}=\tfrac{1}{2}\big(L_-(x)-L_+(x)\big)$, so that
\begin{equation}\label{eq:norm}
\|x\|_\infty \le \max(L_+(x),L_-(x)),\qquad x\in\cV .
\end{equation}

\section{Good rational approximants and minimal lattice vectors}\label{sec:correspondence}

We shall show that best rational approximations for $\alpha\in S^d$ can be computed in
terms of the orbit $(a_t\Lambda_\alpha)_{t\ge 0}$. The key to this correspondence is
the following function on the space $\cX$, defined for a parameter
$\sigma_0\in(0,1]$, fixed once and for all and subject
to the condition imposed at the end of Section \ref{sec:volume}:
\begin{equation}\label{eq:eta}
 \eta(\Lambda):=\min\big\{ L_+(x):\,\,  L_-(x)< \sigma_0,\, x\in\Lambda\big\},\qquad \Lambda\in\cX .
\end{equation}
It follows from \eqref{eq:norm} that this minimum is attained.

\begin{lem}\label{lem:compare}
Set $\kappa_d':=\kappa_d^2/\sigma_0$. For all $\alpha\in S^d$ and $t>\ln(2/\sigma_0)$,
$$
\big(\sigma_0 +  e^{-2t} \kappa_d'\big)^{-1}
k^2_{t+\ln (\sigma_0 +  e^{-2t} \kappa_d')-\ln 2}(\alpha)\le
 \eta(a_t\Lambda_\alpha)\le \sigma_0^{-1}\,k^2_{t+\ln \sigma_0-\ln 2}(\alpha).
$$
\end{lem}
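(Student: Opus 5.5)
The plan is to compute $L_\pm$ explicitly on the lattice $a_t\Lambda_\alpha$ in terms of the rational point $p/q$ and its distance to $\alpha$. Both inequalities then become elementary comparisons between the two constraints: $L_-<\sigma_0$ in \eqref{eq:eta}, and $q<e^s$ in \eqref{def kT}.

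\emph{Step 1: the key computation.} Take $x=(p,q)\in\cV(\bZ)$ and set $y=k_\alpha x$. Since $k_\alpha$ acts by a rotation on the first $d+1$ coordinates sending $\alpha$ to $e_{d+1}$ and fixes the last coordinate, we have $y_{d+2}=q$ and $y_{d+1}=\langle k_\alpha p,e_{d+1}\rangle=\langle p,\alpha\rangle$. Write $\delta:=\|\alpha-p/q\|$. Using $\|p/q\|=\|\alpha\|=1$ gives $\langle p/q,\alpha\rangle=1-\delta^2/2$, so
$$
L_+(y)=q-\langle p,\alpha\rangle=\tfrac{q}{2}\delta^2,\qquad L_-(y)=q+\langle p,\alpha\rangle=q\big(2-\tfrac{\delta^2}{2}\big)=2q-\tfrac{q\delta^2}{2}.
$$
By \eqref{eq:scalling L_pm by a_t}, the vector $a_ty$ then has $L_+=e^t q\delta^2/2$ and $L_-=e^{-t}\big(2q-q\delta^2/2\big)$.

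\emph{Step 2: the upper bound.} Put $s=t+\ln\sigma_0-\ln 2$; this is positive by the hypothesis $t>\ln(2/\sigma_0)$. Hence $k_s(\alpha)\le\kappa_d<\infty$ by \eqref{eq:dir2}, and the minimum in \eqref{def kT} is attained at some $(p,q)$ with $q<e^s=\sigma_0e^t/2$. For this point, $L_-(a_ty)\le 2qe^{-t}<\sigma_0$, so $a_ty$ is admissible in \eqref{eq:eta}. Therefore
$$
\eta(a_t\Lambda_\alpha)\le \frac{e^tq\delta^2}{2}=\frac{e^t}{2e^s}\,k_s^2(\alpha)=\sigma_0^{-1}k_s^2(\alpha).
$$
Combined with \eqref{eq:dir2}, this also gives $\eta(a_t\Lambda_\alpha)\le\kappa_d^2/\sigma_0=\kappa_d'$.

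\emph{Step 3: the lower bound.} This is the step needing care, because the constraint $L_-<\sigma_0$ controls $2q$ only up to the correction term $q\delta^2/2$. Let $a_ty$, with $y=k_\alpha(p,q)$, attain $\eta(a_t\Lambda_\alpha)$. The constraint reads
$$
2q<\sigma_0e^t+\frac{q\delta^2}{2}=\sigma_0e^t+e^{-t}\eta(a_t\Lambda_\alpha)\le \sigma_0e^t+e^{-t}\kappa_d'.
$$
The last inequality uses the bound $\eta\le\kappa_d'$ from Step 2. Setting $s'=t+\ln(\sigma_0+e^{-2t}\kappa_d')-\ln2$, this says exactly that $q<e^{s'}$. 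So $(p,q)$ is admissible for $k_{s'}(\alpha)$, and
$$
k_{s'}^2(\alpha)\le e^{s'}q\delta^2=2e^{s'-t}\,\eta(a_t\Lambda_\alpha)=\big(\sigma_0+e^{-2t}\kappa_d'\big)\,\eta(a_t\Lambda_\alpha).
$$
Rearranging gives the left-hand inequality. The only subtle point in the whole argument is this feeding of the a priori bound $\eta\le\kappa_d'$ from Step 2 back into the constraint, which explains the shape of the shift $e^{-2t}\kappa_d'$.
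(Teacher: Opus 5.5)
Your proof is correct and follows the paper's argument essentially step for step. You use the same identities $L_+(a_tk_\alpha(p,q))=\tfrac12 qe^t\|\alpha-p/q\|^2$ and $L_-(a_tk_\alpha(p,q))=e^{-t}\bigl(2q-\tfrac12 q\|\alpha-p/q\|^2\bigr)$, the same upper bound from the points with $q<\sigma_0e^t/2$ (which also gives $\eta\le\kappa_d'$), and the same lower bound obtained by feeding this a priori bound into the constraint $L_-<\sigma_0$ at a minimiser. The only cosmetic difference is that you read off $y_{d+1}=\langle p,\alpha\rangle$ directly, where the paper expands $\|k_\alpha(q\alpha-p)\|^2=2q(q-\xi_{d+1})$.
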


\begin{proof}
Recall that $\Lambda_\alpha=k_\alpha \mathcal{V}(\mathbb{Z})$. For $(p,q)\in\mathcal{V}(\mathbb{Z})$,
we write 
$$
k_\alpha(p,q)=(\xi_1,\dots,\xi_{d+1},q).
$$ 
Then since $k_\alpha$ is an isometry,
$\xi_1^2+\cdots+\xi_{d+1}^2=p_1^2+\cdots+p_{d+1}^2=q^2$,
and
\begin{align*}
   q^2\Big\lVert \alpha-\frac{p}{~q~} \Big\rVert^2
    &= \Big\lVert q\alpha-p\Big\rVert^2=\Big\lVert k_\alpha\left(q\alpha-p\right)\Big\rVert^2\\
    &= \Big\lVert (-\xi_1,\dots,-\xi_d, q-\xi_{d+1},0)\Big\rVert^2
     =  2q(q-\xi_{d+1}).
\end{align*}
Therefore,
$$
L_+\big(a_tk_\alpha(p,q)\big)=e^t(q-\xi_{d+1})=\frac{1}{2}\,q\,e^t\Big\lVert \alpha-\frac{p}{~q~} \Big\rVert^2.
$$
Also
$$
L_-\big(a_tk_\alpha(p,q)\big)=e^{-t}(q+\xi_{d+1})=e^{-t}\Big(2q-e^{-t} L_+\big(a_t k_\alpha(p,q)\big)\Big).
$$
We conclude that 
\begin{align} \label{eq:minima-eta}
     \eta(a_t\Lambda_\alpha)
     &=\min\Big\{ L_+\big(a_tk_\alpha(p,q)\big)\; : \:(p,q)\in\mathcal{V}(\mathbb{Z}),\; 
q< \sigma_0 e^{t}/2 + e^{-t} L_+\big(a_t k_\alpha(p,q)\big)/2
\Big\}\\
     &=\frac{1}{2}\min\Big\{ q\,e^t\Big\lVert \alpha-\frac{p}{~q~} \Big\rVert^2\; : \:(p,q)\in\mathcal{V}(\mathbb{Z}),\; 
q< \sigma_0 e^{t}/2 + e^{-t} L_+\big(a_t k_\alpha(p,q)\big)/2
\Big\}.\nonumber
\end{align}
Since $L_+\ge 0$, restricting the range of $q$ in \eqref{eq:minima-eta} can only increase the minimum, so that
$$
 \eta(a_t\Lambda_\alpha)\le
 \frac{1}{2}\min\Big\{ q\,e^t\Big\lVert \alpha-\frac{p}{~q~} \Big\rVert^2\; : \:(p,q)\in\mathcal{V}(\mathbb{Z}),\;
q< \sigma_0 e^{t}/2
\Big\}.
$$
With $s:=t+\ln\sigma_0-\ln 2$, so that $e^{s}=\sigma_0e^{t}/2$, the right-hand side
equals $\sigma_0^{-1}k_s^2(\alpha)$, which gives one of the inequalities.
In particular, $k_s(\alpha)\le \kappa_d$ by \eqref{eq:dir2} when $s> 0$, so that
$$
 \eta(a_t\Lambda_\alpha)\le \sigma_0^{-1}\kappa_d^2=\kappa_d'\quad\hbox{for all $\alpha\in S^d$ and $t>\ln(2/\sigma_0)$.}
$$

To prove the other inequality, we pick $(p_0,q_0)\in \mathcal{V}(\mathbb{Z})$ where the minimum in \eqref{eq:minima-eta} is achieved.
Then 
\begin{align*}
q_0 < \sigma_0 e^{t}/2 + e^{-t} L_+\big(a_t k_\alpha(p_0,q_0)\big)/2=
\sigma_0 e^{t}/2 +  e^{-t}  \eta(a_t\Lambda_\alpha)/2
\le
e^{t}(\sigma_0 +  e^{-2 t} \kappa_d')/2.
\end{align*}
Therefore,
$$
 \eta(a_t\Lambda_\alpha)=\frac{1}{2} q_0\,e^t\Big\lVert \alpha-\frac{p_0}{q_0} \Big\rVert^2
\ge \big(\sigma_0 +  e^{-2t}\kappa_d'\big)^{-1}
k_{t+\ln (\sigma_0 +  e^{-2t}\kappa_d')-\ln 2}^2(\alpha),
$$
which proves the second inequality.
\end{proof}

The relation in Lemma \ref{lem:compare} will allow us to compare the minima of the functions $k_t$ and $\eta_t$ over subsets of $\bR_+$ and, under a sparseness condition on the samples, to relate their limit distributions with respect to $\mu_{_{S^d}}$. Since $\min \bT_n\to\infty$ under \eqref{eq:sparse}, Lemma \ref{lem:compare} is applicable at all the times occurring in the proofs below once $n$ is large, and we use it there without further comment. For simplicity, we write
$$
\eta_t(\alpha):=\eta(a_t\Lambda_\alpha)
$$
and recall that $\nu$ is the push-forward of $\mu_{_{S^d}}$ under $\alpha\mapsto\Lambda_\alpha$, so that a limit law with respect to $\nu$ for a function on $\mathcal{X}$ is equivalent to the corresponding limit law with respect to $\mu_{_{S^d}}$ for its composition with $\alpha\mapsto\Lambda_\alpha$.
We define, for any finite subset $\bT$ of $\bR_+$, the minima
$$
    \widetilde{\eta}_{\bT}(\Lambda):=\min_{t\in \bT} \eta(a_t\Lambda),\quad \Lambda\in \cX,
    \qquad\hbox{and}\qquad
    \widetilde{\eta}_{\bT}(\alpha):=\widetilde{\eta}_{\bT}(\Lambda_\alpha),\quad \alpha\in S^d,
$$
in analogy with $\widetilde{k}_{\bT}$ in \eqref{minimum kT tilde}.
We show in the remainder of this section that the minima $\widetilde{k}_{\bT_n}$ and $\widetilde{\eta}_{\bT_n}$ have related limit $\mu_{_{S^d}}$-distributions along sparse sequences $(\bT_n)$, and that the same is true of the corresponding counting functions. We begin with the minima, where the argument is simpler.

\begin{lem}\label{lem:tilde kt tilde etaT}
Let $\lambda>0$. Suppose that for all sequences $(\bT_n)$ of finite subsets of $\bR_+$ satisfying the sparseness condition \eqref{eq:sparse},
$$|\bT_n|^{2/d}\cdot\widetilde{\eta}_{\bT_n} \xLongrightarrow[\mu_{_{S^d}}]{} \Wei(\lambda, d/2).
$$
Then for all such sequences $(\bT_n)$,
$$|\bT_n|^{1/d}\cdot\widetilde{k}_{\bT_n} \xLongrightarrow[\mu_{_{S^d}}]{} \Wei(\sigma_0^{1/2}\lambda^{1/2}, d).
$$
\end{lem}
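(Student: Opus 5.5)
The plan is to sandwich $\widetilde k_{\bT_n}$ between minima of $\eta$ over shifted sampling sets, using Lemma \ref{lem:compare}, and then apply the hypothesis to those shifted sets.

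Write $c:=\ln\sigma_0-\ln 2$. The upper inequality of Lemma \ref{lem:compare}, applied at time $t-c$, gives
\[
k_t^2(\alpha)\ge \sigma_0\,\eta_{t-c}(\alpha)\qquad\text{for all } t\in\bT_n
\]
once $n$ is large. Hence
\[
\widetilde k_{\bT_n}^2\ge \sigma_0\,\widetilde\eta_{\bT_n-c}.
\]

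For the reverse bound, fix $\delta>0$ and set $\epsilon_t:=e^{-2t}\kappa_d'$. Put $c_t:=\ln(\sigma_0+\epsilon_t)-\ln 2$, and let $s(t)$ solve $s+c_{s}=t$. Such a solution exists and is unique for large $t$, since $s\mapsto s+c_s$ is increasing and $c_s\to c$. Moreover $s(t)=t-c+o(1)$. The lower inequality of Lemma \ref{lem:compare} at time $s(t)$ gives
\[
k_t^2\le(\sigma_0+\epsilon_{s(t)})\,\eta_{s(t)}\le(1+\delta)\,\sigma_0\,\eta_{s(t)}
\]
for $n$ large, uniformly in $t\in\bT_n$, because $\min\bT_n\to\infty$. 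Taking minima over $t\in\bT_n$ yields
\[
\widetilde k_{\bT_n}^2\le (1+\delta)\,\sigma_0\,\widetilde\eta_{\bT_n'},\qquad \bT_n':=\{s(t):t\in\bT_n\}.
\]

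Next I check that both $\bT_n-c$ and $\bT_n'$ satisfy \eqref{eq:sparse}, after discarding finitely many $n$ so that they lie in $\bR_+$. Their cardinality equals $|\bT_n|$, since $t\mapsto s(t)$ is injective. Their minimal elements and minimal gaps differ from those of $\bT_n$ by bounded amounts. For $\bT_n'$, note that $s$ is $1$-Lipschitz up to $o(1)$, so $|s(t)-s(t')|\ge |t-t'|-o(1)$. Since the quantity in \eqref{eq:sparse} divided by $\log|\bT_n|$ tends to infinity, bounded perturbations do not affect this. The hypothesis then applies to both sequences. Setting $N=|\bT_n|$, for every $\xi>0$,
\[
\mu_{S^d}\big(N^{2/d}\widetilde\eta_{\bT_n-c}<\xi\big)\to W_{\lambda,d/2}(\xi),
\]
and the same holds for $\bT_n'$.

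Finally I convert to $k$. From the two bounds,
\[
\big\{N^{1/d}\widetilde k_{\bT_n}<x\big\}\subseteq\big\{N^{2/d}\widetilde\eta_{\bT_n-c}<x^2/\sigma_0\big\},
\]
\[
\big\{N^{1/d}\widetilde k_{\bT_n}<x\big\}\supseteq\big\{N^{2/d}\widetilde\eta_{\bT_n'}<x^2/((1+\delta)\sigma_0)\big\}.
\]
Passing to the limit gives
\[
\limsup_n\mu_{S^d}\big(N^{1/d}\widetilde k_{\bT_n}<x\big)\le 1-\exp\big(-(x^2/(\sigma_0\lambda))^{d/2}\big),
\]
and a corresponding $\liminf$ bound with $\sigma_0$ replaced by $(1+\delta)\sigma_0$. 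Letting $\delta\to0$ and using continuity of $W$ shows that the limit is $1-\exp(-(x/(\sigma_0\lambda)^{1/2})^d)$. This is $W_{\sigma_0^{1/2}\lambda^{1/2},d}(x)$. Convergence at every $x$ of the distribution functions gives the claimed weak convergence.

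The main obstacle is the time-dependent shift in the lower inequality. The plan handles it by inverting $s\mapsto s+c_s$ and checking that the reparametrised set $\bT_n'$ is still sparse with the same cardinality; everything else is routine sandwiching.
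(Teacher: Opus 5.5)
Your proposal is correct and follows the paper's proof essentially step for step. The upper bound comes from the shifted set $\bT_n-c$ (the paper's $\bT_n'$). The lower bound comes from the inverse $s=\psi^{-1}$ of $\psi(t)=t+\ln(\sigma_0+e^{-2t}\kappa_d')-\ln 2$ (the paper's $\bT_n''$), followed by the same sandwich and a continuity limit. Your check that both reparametrised sequences keep cardinality $|\bT_n|$ and stay sparse is just a more explicit version of what the paper asserts.
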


\begin{proof}
We need to show that for every $\xi>0$, 
$$
\mu_{_{S^d}}\left(\left\{\alpha\in S^d:\, |\bT_n|^{1/d}\cdot\widetilde{k}_{\bT_n}(\alpha)<\xi\right\}\right)
\to W_{\sigma_0^{1/2}\lambda^{1/2},d}(\xi),
$$
where $W_{\lambda,a}$ denotes the distribution function of $\Wei(\lambda,a)$.
It follows from Lemma \ref{lem:compare} that 
for
\begin{equation}\label{eq:t1}
\bT_n':=\{t-\ln\sigma_0+\ln 2:\, t\in \bT_n\},
\end{equation}
we have
$$
\mu_{_{S^d}}\left(\left\{\alpha\in S^d:\, |\bT_n|^{1/d}\cdot\widetilde{k}_{\bT_n}(\alpha)<\xi\right\}\right)
\le \mu_{_{S^d}}
\left(\left\{\alpha\in S^d:\, |\bT'_n|^{2/d}\cdot\sigma_0 \widetilde{\eta}_{\bT'_n}(\alpha)<\xi^2\right\}\right).
$$
Since the sets $\bT_n'$ also satisfy \eqref{eq:sparse}, it follows from our assumption that
$$
\mu_{_{S^d}}\left(\left\{\alpha\in S^d:\, |\bT'_n|^{2/d}\cdot\sigma_0 \widetilde{\eta}_{\bT'_n}(\alpha)<\xi^2\right\}\right) \to W_{\lambda,d/2}(\sigma_0^{-1}\xi^2)=W_{\sigma_0^{1/2}\lambda^{1/2},d}(\xi).
$$
To obtain the lower bound, we consider the sets 
\begin{equation}\label{eq:t2}
\bT_n'':=\{\psi^{-1}(t):\, t\in \bT_n\},
\end{equation}
where $\psi(t):= t+\ln (\sigma_0 +  e^{-2t} \kappa_d')-\ln 2$.
We note that this function is strictly increasing for sufficiently large $t$, so that \eqref{eq:t2} is well defined for $n$ large, $\min \bT_n$ tending to infinity, and 
$$
|\psi(t_1)-\psi(t_2)|=|t_1-t_2|+O\left(e^{-2\min(t_1,t_2)}\right),
$$
so that the sets $\bT_n''$ also satisfy \eqref{eq:sparse}.
Applying Lemma \ref{lem:compare}, we obtain that
\begin{align*}
&\mu_{_{S^d}}\left(\left\{\alpha\in S^d:\, |\bT_n|^{1/d}\cdot\widetilde{k}_{\bT_n}(\alpha)<\xi\right\}\right)\\
&\qquad\ge
\mu_{_{S^d}}\left(\left\{\alpha\in S^d:\, |\bT''_n|^{2/d}\cdot
\big(\sigma_0 +  e^{-2\min(\bT''_n)} \kappa_d'\big)
\widetilde{\eta}_{\bT''_n}(\alpha)<\xi^2\right\}\right).
\end{align*}
Let $\eps>0$. For sufficiently large $n$, we have $e^{-2\min(\bT''_n)}<\eps$,
so that it follows from our assumption that 
\begin{align*}
\mu_{_{S^d}}\left(\left\{\alpha\in S^d:\, |\bT_n|^{1/d}\cdot\widetilde{k}_{\bT_n}(\alpha)<\xi\right\}\right)&\ge
\mu_{_{S^d}}\left(\left\{\alpha\in S^d:\, |\bT''_n|^{2/d}\cdot
\widetilde{\eta}_{\bT''_n}(\alpha)<
(\sigma_0 +  \eps\kappa_d')^{-1} \xi^2\right\}\right) \\
&\to W_{\lambda,d/2}\left((\sigma_0 +  \eps\kappa_d')^{-1}\xi^2\right)= W_{\lambda^{1/2}(\sigma_0 +  \eps\kappa_d')^{1/2},d}\left(\xi\right).
\end{align*}
Since this holds for every $\eps>0$, using continuity we deduce  the correct lower bound.
\end{proof}

The above argument can be easily adapted to analyse the distribution of the number of approximations.
For a finite subset $\bT$ of $\bR_+$ and $\tau>0$, recall $\mathcal{K}_{\bT}(\alpha,\tau)$ from \eqref{def K} and set
\begin{equation}\label{def N}
\mathcal{N}_{\bT}(\Lambda,\tau):= \#\Big\{t\in \bT \; : \; \eta(a_t\Lambda)<\tau \Big\},\quad \Lambda\in \cX,
\end{equation}
writing again $\mathcal{N}_{\bT}(\alpha,\tau):=\mathcal{N}_{\bT}(\Lambda_\alpha,\tau)$ for $\alpha\in S^d$.
Similarly to Lemma \ref{lem:tilde kt tilde etaT}, the following lemma makes precise the relation between Poisson approximations of $\mathcal{N}_{\bT_n}$ and $\mathcal{K}_{\bT_n}$.

\begin{lem}\label{lem:poisson tilde kt tilde etaT}
Let $m>0$. Suppose that for all sequences $(\bT_n)$ of finite subsets of $\bR_+$ satisfying the sparseness condition \eqref{eq:sparse} and $\tau >0$,
$$
\mathcal{N}_{\bT_n}(\cdot,|\bT_n|^{-2/d} \tau)\xLongrightarrow[\mu_{_{S^d}}]{} \Poi(m\tau^{d/2}).
$$
Then for all such sequences $(\bT_n)$ and $\tau >0$,
$$ \mathcal{K}_{\bT_n}(\cdot,|\bT_n|^{-1/d} \tau)\xLongrightarrow[\mu_{_{S^d}}]{} \Poi(m\sigma_0^{-d/2}\tau^{d}).$$
\end{lem}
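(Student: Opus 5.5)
We will adapt the sandwich argument from the proof of Lemma \ref{lem:tilde kt tilde etaT} to counting functions. The difficulty is that a pointwise two-sided bound on a counting function does not give a distributional bound unless the two sides count over the same index set with comparable thresholds. We therefore work with the events $\{\mathcal{K}_{\bT_n}(\cdot,|\bT_n|^{-1/d}\tau)\ge j\}$ for each fixed $j\ge 0$. Since a sequence of $\bZ_{\ge0}$-valued random variables converges to $\Poi(m')$ as soon as $\mu_{_{S^d}}(\{\mathcal{K}\ge j\})\to \Poi(m')([j,\infty))$ for every $j$, it suffices to squeeze these tail probabilities between tails of $\mathcal{N}$ along shifted sampling sets.

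\emph{Upper bound.} Put $\bT_n'$ as in \eqref{eq:t1}. Each $t\in\bT_n$ corresponds to $t'=t-\ln\sigma_0+\ln2\in\bT_n'$, and the upper inequality of Lemma \ref{lem:compare} gives $\eta_{t'}(\alpha)\le\sigma_0^{-1}k_t^2(\alpha)$. Hence $k_t(\alpha)<|\bT_n|^{-1/d}\tau$ implies $\eta_{t'}(\alpha)<|\bT_n'|^{-2/d}\sigma_0^{-1}\tau^2$, using $|\bT_n'|=|\bT_n|$. This yields the pointwise inequality
\[
\mathcal{K}_{\bT_n}(\alpha,|\bT_n|^{-1/d}\tau)\le \mathcal{N}_{\bT_n'}(\alpha,|\bT_n'|^{-2/d}\sigma_0^{-1}\tau^2).
\]
The sets $\bT_n'$ satisfy \eqref{eq:sparse}, so the hypothesis applies with threshold parameter $\sigma_0^{-1}\tau^2$. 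It gives $\limsup\mu_{_{S^d}}(\{\mathcal{K}\ge j\})\le \Poi(m\sigma_0^{-d/2}\tau^d)([j,\infty))$.

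\emph{Lower bound.} Use $\bT_n''$ as in \eqref{eq:t2}, with $\psi$ bijective for large $n$. For $s=\psi^{-1}(t)$, the lower inequality of Lemma \ref{lem:compare} gives
\[
k_t^2(\alpha)\le(\sigma_0+e^{-2s}\kappa_d')\,\eta_s(\alpha)\le(\sigma_0+\eps\kappa_d')\,\eta_s(\alpha)
\]
for $n$ large. Hence $\eta_s(\alpha)<|\bT_n''|^{-2/d}(\sigma_0+\eps\kappa_d')^{-1}\tau^2$ implies $k_t(\alpha)<|\bT_n|^{-1/d}\tau$, so
\[
\mathcal{K}_{\bT_n}(\alpha,|\bT_n|^{-1/d}\tau)\ge\mathcal{N}_{\bT_n''}\big(\alpha,|\bT_n''|^{-2/d}(\sigma_0+\eps\kappa_d')^{-1}\tau^2\big).
\]
The sets $\bT_n''$ satisfy \eqref{eq:sparse}, so the hypothesis gives
\[
\liminf\mu_{_{S^d}}(\{\mathcal{K}\ge j\})\ge \Poi\big(m(\sigma_0+\eps\kappa_d')^{-d/2}\tau^d\big)([j,\infty)).
\]

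\emph{Conclusion.} The map $m'\mapsto\Poi(m')([j,\infty))$ is continuous. Letting $\eps\to0$ therefore makes the lower bound match the upper bound, which gives convergence of every tail probability to that of $\Poi(m\sigma_0^{-d/2}\tau^d)$. Consequently every point probability converges as well, by taking differences of consecutive tails.

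The main obstacle is already resolved by the bijective correspondences $t\leftrightarrow t'$ and $t\leftrightarrow\psi^{-1}(t)$. These preserve cardinality, so the normalisations $|\bT_n|^{-2/d}$ agree exactly, and the pointwise inequalities between counting functions hold term by term. What remains is routine: checking that $\bT_n''$ satisfies \eqref{eq:sparse}, which was done in the previous proof.
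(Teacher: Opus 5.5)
Your proposal is correct and is essentially the paper's argument. Lemma \ref{lem:compare}, transported along the bijections $t\mapsto t-\ln\sigma_0+\ln 2$ and $t\mapsto\psi^{-1}(t)$, gives the pointwise comparisons $\mathcal{K}_{\bT_n}\le\mathcal{N}_{\bT_n'}$ and $\mathcal{K}_{\bT_n}\ge\mathcal{N}_{\bT_n''}$; you then let $\eps\to 0$ using continuity in the Poisson parameter. The only difference is cosmetic: you squeeze the tails $\mu_{_{S^d}}(\{\mathcal{K}_{\bT_n}\ge j\})$, while the paper squeezes the complementary probabilities $\mu_{_{S^d}}(\{\mathcal{K}_{\bT_n}\le k\})$.
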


\begin{proof} 
According to our assumption, for all $\tau>0$ and all integers $k\geq 0$, 
\[
\mu_{_{S^d}}\Big(\left\{\alpha\in S^d:\,   \mathcal{N}_{\bT_n}(\alpha, |\bT_n|^{-2/d} \tau) \le  k \right\}\Big)
\longrightarrow \sum_{i=0}^k \frac{(m \tau^{d/2})^i}{i!} e^{-m \tau^{d/2}}.
\]
Because of Lemma \ref{lem:compare}, with
$\bT_n'$ as in \eqref{eq:t1},
we have $\sigma_0\eta_{t'}\le k_t^2$ for $t\in \bT_n$ and $t'=t-\ln\sigma_0+\ln 2\in \bT_n'$, so that
\begin{align*}
&\mu_{_{S^d}}\left(\left\{\alpha\in S^d:\, \mathcal{K}_{\bT_n}(\alpha, |\bT_n|^{-1/d} \tau) \leq k\right\}\right)\\
&\qquad= \mu_{_{S^d}}\left(\left\{\alpha\in S^d:\, \#\Big\{ t\in \bT_n\; : k_t(\alpha)^2< |\bT_n|^{-2/d} \tau^2\Big\}  \leq k\right\}\right) \\
&\qquad\mathrel{\geq} \mu_{_{S^d}}\left( \left\{\alpha\in S^d:\, \#\Big\{ t\in \bT_n'\; : \sigma_0\eta_t(\alpha)< |\bT_n'|^{-2/d} \tau^2\Big\}  \leq k\right\}\right) \\
&\qquad=\mu_{_{S^d}}\left(\left\{\alpha\in S^d:\, \mathcal{N}_{\bT'_n}(\alpha, \sigma_0^{-1}|\bT'_n|^{-2/d} \tau^2) \leq k\right\}\right).
\end{align*}
Similarly, with $\bT_n''$ as in \eqref{eq:t2} and when $e^{-2\min(\bT''_n)}
<\eps$, we have $k_t^2\le (\sigma_0 +  \eps\kappa_d')\eta_{t''}$ for $t\in \bT_n$ and $t''=\psi^{-1}(t)\in \bT_n''$, so that
\begin{align*}
&\mu_{_{S^d}}\left(\left\{\alpha\in S^d:\, \mathcal{K}_{\bT_n}(\alpha, |\bT_n|^{-1/d} \tau) \leq k\right\}\right)\\
&\qquad= \mu_{_{S^d}}\left(\left\{\alpha\in S^d:\, \#\Big\{ t\in \bT_n\; : k_t(\alpha)^2< |\bT_n|^{-2/d} \tau^2\Big\}  \leq k\right\}\right) \\
&\qquad\mathrel{\le} \mu_{_{S^d}}\left( \left\{\alpha\in S^d:\, \#\Big\{ t\in \bT_n''\; : (\sigma_0 +  \eps\kappa_d')\eta_t(\alpha)< |\bT_n''|^{-2/d} \tau^2\Big\}  \leq k\right\}\right) \\
&\qquad=\mu_{_{S^d}}\left(\left\{\alpha\in S^d:\, \mathcal{N}_{\bT''_n}\left(\alpha, (\sigma_0 +  \eps\kappa_d')^{-1}|\bT''_n|^{-2/d} \tau^2\right) \leq k\right\}\right).
\end{align*}
Our claim follows from these two estimates and our assumption using continuity.
\end{proof}

By Lemmas \ref{lem:tilde kt tilde etaT} and \ref{lem:poisson tilde kt tilde etaT},
both of our theorems will follow once we establish the Poisson limit for the counting
functions $\mathcal{N}_{\bT_n}$, which also yields the Weibull limit for
$\widetilde{\eta}_{\bT_n}$. This will be deduced in Section \ref{sec:Poisson} from
effective multiple equidistribution, combined with the volume estimates of Section~\ref{sec:volume}.

\section{Volume estimates}\label{sec:volume}

We recall from the previous section that the squares $k_t(\alpha)^2$
are comparable to the values $\eta(a_t\Lambda_\alpha)$, where $\eta$ is defined in \eqref{eq:eta}. It is therefore convenient to introduce, for $\tau,\sigma>0$, the sets
\begin{equation}\label{eq:C}
\mathcal{C}(\tau,\sigma):=\{x\in \mathcal{V}:\,\, L_+(x)<\tau,\; L_-(x)< \sigma\}.
\end{equation}
These sets increase in each of the two parameters, and, in particular, for
$\Lambda\in\cX$,
$$
\eta(\Lambda)<\tau \quad \Longleftrightarrow\quad \Lambda\cap \mathcal{C}(\tau,\sigma_0)\ne \emptyset.
$$
In this section we study properties of the sets $\mathcal{C}(\tau,\sigma)$
as well as the corresponding hitting sets 
\begin{equation}\label{def Omega}
\Omega(\tau,\sigma):= \{ \Lambda\in \mathcal{X}:\; \Lambda\cap \mathcal{C}(\tau,\sigma)\neq \emptyset\}.
\end{equation}

\begin{lem}\label{lem:volume regularity}
There exists $C_d>0$, depending only on $d$, such that
$$
\mu_\mathcal{V}\big(\mathcal{C}(\tau,\sigma)\big) =C_d (\tau\sigma)^{d/2}
\qquad\hbox{for all $\tau,\sigma>0$.}
$$
\end{lem}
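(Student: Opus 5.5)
The plan is a scaling argument followed by a direct polar-coordinate computation that produces the constant $C_d$.

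\emph{Scaling.} For $s\in\bR$, the relations \eqref{eq:scalling L_pm by a_t} give
$$
a_s\,\mathcal{C}(\tau,\sigma)=\mathcal{C}(e^{s}\tau,e^{-s}\sigma),
$$
because $x\in\mathcal{C}(\tau,\sigma)$ if and only if $L_+(a_sx)<e^s\tau$ and $L_-(a_sx)<e^{-s}\sigma$. Since $\mu_\cV$ is $G$-invariant, $\mu_\cV(\mathcal{C}(\tau,\sigma))$ depends only on the product $\tau\sigma$. Scalar dilation $x\mapsto cx$ with $c>0$ preserves $\cV$. In the polar coordinates \eqref{decomposition muV} it is the shift $t\mapsto t-\ln c$, so it multiplies $\mu_\cV$ by $c^{d}$. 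Both $L_\pm$ are linear, so $c\,\mathcal{C}(\tau,\sigma)=\mathcal{C}(c\tau,c\sigma)$. Together these give
$$
\mu_\cV(\mathcal{C}(\tau,\sigma))=f(\tau\sigma),\qquad f(c^2u)=c^d f(u).
$$
Hence $f(u)=f(1)\,u^{d/2}$, and we set $C_d:=\mu_\cV(\mathcal{C}(1,1))$.

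\emph{Finiteness and positivity of $C_d$.} This is the only point needing care. Use polar coordinates $x=e^{-t}(\omega,1)$ with $\omega\in S^d\cong K/M$, via $ke_0=(ke_{d+1},1)$. Then
$$
L_+(x)=e^{-t}(1-\omega_{d+1}),\qquad L_-(x)=e^{-t}(1+\omega_{d+1}).
$$
The conditions $L_\pm(x)<1$ therefore read $e^{-t}<\min\big((1-\omega_{d+1})^{-1},(1+\omega_{d+1})^{-1}\big)=(1+|\omega_{d+1}|)^{-1}$. Integrating $e^{-dt}\,dt$ over $e^{-t}<r$ gives $r^d/d$, so
$$
C_d=\frac1d\int_{S^d}(1+|\omega_{d+1}|)^{-d}\,d\mu_{S^d}(\omega).
$$
The integrand lies in $[2^{-d},1]$, so $C_d$ is finite and positive. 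This explicit formula also confirms the scaling law independently.

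The main obstacle is orientation. We must check that the $t\to-\infty$ end, where vectors are large, is excluded by the bound on $L_\pm$. It is, because $\max(L_+,L_-)\ge e^{-t}$. We must also check that $e^{-dt}\,dt$ is integrable toward $t\to+\infty$, which holds since $d>0$.
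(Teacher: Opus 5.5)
Your proof is correct, but it takes a different route from the paper. The paper does one explicit computation. It writes the invariant measure as $x_{d+2}^{-1}\,dx_1\cdots dx_{d+1}$ (up to a positive constant) and changes variables to $y_i=x_i$ for $i\le d$ and $y_{d+1}=L_-(x)$. In these coordinates the measure becomes $y_{d+1}^{-1}\,dy$ and $\mathcal{C}(\tau,\sigma)$ becomes $\{0<y_{d+1}<\sigma,\ \sum_{i\le d}y_i^2<\tau y_{d+1}\}$. Integrating the ball volumes then gives $2d^{-1}\mathrm{vol}(B^d)(\tau\sigma)^{d/2}$, where $B^d$ is the unit ball of $\bR^d$. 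You instead get the form $(\tau\sigma)^{d/2}$ from symmetry alone. Invariance of $\mu_{\mathcal{V}}$ under $A$ makes the volume a function of $\tau\sigma$ only, and homogeneity of degree $d$ under dilations forces the exponent $d/2$. What remains is to show that the single number $\mu_{\mathcal{V}}(\mathcal{C}(1,1))$ is finite and positive, and your polar-coordinate bound $2^{-d}/d\le C_d\le 1/d$ does exactly that. Your route explains structurally where the exponent comes from, and it works directly in the normalisation \eqref{decomposition muV}; the paper's computation holds only up to the constant relating $x_{d+2}^{-1}dx$ to $\mu_{\mathcal{V}}$. In exchange, the paper's route uses no group action and produces a closed form. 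Comparing the two gives $C_d=2\,\mathrm{vol}(B^d)/\big(d\,\mathrm{vol}(S^d)\big)$ in the normalisation \eqref{decomposition muV}, so your integral $\int_{S^d}(1+|\omega_{d+1}|)^{-d}\,d\mu_{S^d}(\omega)$ equals $2\,\mathrm{vol}(B^d)/\mathrm{vol}(S^d)$. One small caveat: your remark that the polar formula ``confirms the scaling law independently'' is overstated. For general $(\tau,\sigma)$ the formula reads $\frac1d\int_{S^d}\min\big(\tau/(1-\omega_{d+1}),\,\sigma/(1+\omega_{d+1})\big)^d\,d\mu_{S^d}(\omega)$, and its dependence on $\tau\sigma$ alone is not visible without the $A$-invariance. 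Nothing in your argument relies on that remark, though.
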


\begin{proof}
We parametrise $\mathcal{V}$ by $x_1,\ldots,x_{d+1}$, the remaining coordinate being
determined by $Q(x)=0$ and $x_{d+2}>0$. Up to a positive constant, the invariant
volume is then given by $x_{d+2}^{-1}dx_1\ldots dx_{d+1}$.
We use the new coordinates
$$
y_1=x_1,\;\ldots,\; y_d=x_d,\; y_{d+1}=x_{d+2}+x_{d+1}=L_-(x).
$$
We have $(y_{d+1}-x_{d+1})^2 =\sum_{i=1}^d y_i^2 + x_{d+1}^2$, so that 
$$
x_{d+1} = -\frac{1}{2}y_{d+1}^{-1}\left(\sum_{i=1}^d y_i^2-y^2_{d+1}\right).
$$
Also,
$$
x_{d+2} = y_{d+1}-x_{d+1} = \frac{1}{2}y_{d+1}^{-1}\left(\sum_{i=1}^d y_i^2+y^2_{d+1}\right).
$$
Hence, since $x_{d+2}^{-1}=2y_{d+1}\big(\sum_{i=1}^d y_i^2+y_{d+1}^2\big)^{-1}$, the invariant volume in the new coordinates is given by
$$
2y_{d+1}\left(\sum_{i=1}^d y_i^2+y^2_{d+1}\right)^{-1} \cdot \left|\frac{\partial x_{d+1}}{\partial y_{d+1}}\right|  dy_1\ldots dy_{d+1}=y_{d+1}^{-1} dy_1\ldots dy_{d+1}.
$$
Since $L_+(x)=y_{d+1}^{-1}\sum_{i=1}^{d}y_i^2$, the domain of integration is given by the inequalities 
$$
0<y_{d+1}<\sigma\qand y_{d+1}^{-1}\sum_{i=1}^d y_i^2<\tau .
$$
Finally, we obtain 
$$
\int_0^{\sigma} y_{d+1}^{-1} \Big( \int_{\sum_{i=1}^d y_i^2 < \tau y_{d+1}} dy_1 \ldots dy_d \Big) dy_{d+1}
=\int_0^{\sigma} \omega_d \tau^{d/2} y_{d+1}^{d/2-1} dy_{d+1}=2d^{-1}\omega_d\, (\tau\sigma)^{d/2},
$$
where $\omega_d$ denotes the volume of the $d$-dimensional unit ball. This gives
the lemma.
\end{proof}

Given $\eps> 0$,  let
\begin{equation}
\label{def Weps}
W_\eps:= \big\{ g \in G : \| \, g - \id \|_{\textrm{op}} < \eps\big\},
\end{equation}
where $\|\cdot\|_{\textrm{op}}$ denotes the operator norm on $G$ with respect to the Euclidean norm. 

\begin{lem}
\label{lem:Weps}
There exists $c=c(d)>0$ such that
\[
W_\eps\cdot \mathcal{C}(\tau,\sigma) \subseteq \mathcal{C}(\tau+c\eps,\;\sigma+c\eps)\quad\hbox{for all $\eps>0$ and $\tau,\sigma \in (0,1]$.}
\]
\end{lem}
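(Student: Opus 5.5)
The argument is a direct linear estimate. Fix $x\in\mathcal{C}(\tau,\sigma)$ and $g\in W_\eps$. Then $gx\in\cV$, since $G$ preserves $\cV$. So we only need to bound $L_\pm(gx)$.

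Both $L_+$ and $L_-$ are linear forms on $\bR^{d+2}$. Each has coefficient vector $e_{d+2}\mp e_{d+1}$, of Euclidean norm $\sqrt2$. Hence
$$
L_\pm(gx)=L_\pm(x)+L_\pm\big((g-\id)x\big)\le L_\pm(x)+\sqrt2\,\|g-\id\|_{\textrm{op}}\,\|x\|<L_\pm(x)+\sqrt2\,\eps\,\|x\|,
$$
where $\|x\|$ is the Euclidean norm. It remains to bound $\|x\|$ uniformly on $\mathcal{C}(\tau,\sigma)$ when $\tau,\sigma\le 1$.

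For $x\in\cV$ the identities recorded before \eqref{eq:norm} give
$$
\|x\|^2=\sum_{i=1}^d x_i^2+x_{d+1}^2+x_{d+2}^2=L_+L_-+\tfrac14(L_--L_+)^2+\tfrac14(L_++L_-)^2=L_+L_-+\tfrac12(L_+^2+L_-^2).
$$
On $\mathcal{C}(\tau,\sigma)$ with $\tau,\sigma\le1$ we have $0\le L_\pm<1$, because $L_\pm\ge0$ on $\cV$ by \eqref{eq:scalling L_pm by a_t}. Therefore $\|x\|^2<2$. Equivalently, \eqref{eq:norm} gives $\|x\|_\infty<1$, and so $\|x\|\le\sqrt{d+2}$.

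Combining the two estimates gives $L_+(gx)<\tau+2\eps$ and $L_-(gx)<\sigma+2\eps$. Thus $c=2$ works, and any choice $c\ge\sqrt{2(d+2)}$ coming from the cruder sup-norm bound also works. This shows $gx\in\mathcal{C}(\tau+c\eps,\sigma+c\eps)$.

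There is no real obstacle here. The only point to check is that the hypothesis $\tau,\sigma\le1$ is exactly what makes the norm bound uniform, and hence what makes $c$ independent of $\tau$, $\sigma$ and $\eps$. No smallness of $\eps$ is needed, since the estimate is linear in $g-\id$.
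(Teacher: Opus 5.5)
Your proof is correct and follows essentially the same route as the paper: write $L_\pm(gx)=L_\pm(x)+L_\pm((g-\id)x)$, use $|L_\pm(v)|\le\sqrt2\,\|v\|$, and bound $\|x\|$ uniformly on $\mathcal{C}(\tau,\sigma)$ using $\tau,\sigma\le 1$. The only difference is the norm bound: the paper uses \eqref{eq:norm} to get $\|x\|\le\sqrt{d+2}$ and hence $c=\sqrt{2(d+2)}$, whereas your identity $\|x\|^2=\tfrac12(L_++L_-)^2$ on $\cV$ gives the dimension-free constant $c=2$.
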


\begin{proof}
We have 
$$
W_\eps\cdot \mathcal{C}(\tau,\sigma)
\subseteq 
\big\{ x \in \mathcal{V} \,  : \,  \exists \,  y\in\mathcal{V} \enskip \textrm{such that} \enskip \|x-y\| < \eps\|y\|,   \enskip L_+(y) < \tau,  \enskip L_-(y) < \sigma \big\}.
$$
By \eqref{eq:norm}, every such $y$ satisfies $\|y\|_\infty< \max(\tau,\sigma)\le 1$, whence $\|y\|\le \sqrt{d+2}$.
Therefore,
$$
L_+(x) < \tau + |L_+(x) - L_+(y)|\le
\tau + \sqrt{2(d+2)}\,\eps,
$$
since $|L_\pm(v)|\le\sqrt{2}\,\|v\|$, and similarly
$L_-(x) <
\sigma + \sqrt{2(d+2)}\,\eps$.
The claim follows.
\end{proof}

The proper rational parabolic subgroups of $G$ are precisely the stabilisers of the
isotropic lines of $\Q^{d+2}$, and, by Witt's extension theorem, $G(\Q)$ acts
transitively on the set of these lines. Hence every proper rational parabolic subgroup of
$G$ is of the form $g^{-1}Pg$ with $g\in G(\Q)$.
The cusp regions of $\cX$ are parametrised by the $\Gamma$-conjugacy classes of proper
rational parabolic subgroups of $G$. Their number $s$ is finite, and we fix representatives
\begin{equation}\label{eq:cusps}
P_i=g_i^{-1}Pg_i,\qquad g_i\in G(\Q),\quad i=1,\dots,s .
\end{equation}
Following Garland and Raghunathan \cite{GarlandRaghunathan}, 
we parametrise the cusp regions by translates of
Siegel sets $\mathfrak{S}_{T,B}$. For $T\in\bR$ and a relatively
compact open subset $B$ of $U$ we set
$$
A_T:=\{a_t:\; t>T\}\qand \mathfrak{S}_{T,B}:=KA_TB.
$$
We write ${}^0P:=MU$, put
$$
\Gamma_i:=g_i\Gamma g_i^{-1}\cap\,{}^0P\qand \Lambda_i:=\Gamma_i\cap U ,
$$
and denote by $\pi:G\rightarrow\mathcal{X}=G/\Gamma$ the projection.
We note that $\Lambda_i$ is a cocompact lattice in $U$.
Let $M_i$ be the image of $\Gamma_i$ under the projection
${}^0P\rightarrow M$. It is finite because $M$ is compact, and the kernel of this map is $\Lambda_i$.

Since the line $g_i^{-1}\bR e_0$ is defined over $\Q$, we have
$$
\mathcal{V}(\bZ)\cap g_i^{-1}\bR e_0=\{m\,w_i:\; m\in \mathbb{N}\}
$$
for a unique $w_i\in \mathcal{V}(\bZ)$, and we define $c_i>0$ by
\begin{equation}\label{eq:ci}
g_iw_i=c_ie_0 .
\end{equation}

\begin{prop}\label{prop:reduction}
Let $B_i\subset U$ be bounded Borel fundamental domains
for $\Lambda_i$ and $W_i\subset K$ Borel fundamental domains  for
the right translation action of $M_i$ on $K$.	
There exists $T_0\geq 0$  such that the sets
$S_i:=W_iA_{T_0}B_i$ satisfy:
\begin{enumerate}
\item[(i)] $\pi$ is injective on $S_ig_i$ for every $i=1,\dots,s$;
\item[(ii)] the sets $\pi(S_ig_i)$, $i=1,\dots,s$, are pairwise disjoint;
\item[(iii)] the set $\Omega_0:=\mathcal{X}\setminus\bigsqcup_{i=1}^{s}\pi(S_ig_i)$ is
relatively compact.
\end{enumerate}
\end{prop}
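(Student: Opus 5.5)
The plan is to derive Proposition~\ref{prop:reduction} from the reduction theory of Borel and Garland--Raghunathan for the real rank one group $G$, specialised to the explicit Iwasawa coordinates $G=KAU$. The cusps are represented by $P_i=g_i^{-1}Pg_i$, and the starting point is the standard statement: for $T$ large enough, the set $\bigcup_i \mathfrak{S}_{T,B_i'}\,g_i\Gamma$ covers $G$ up to a compact set, where each $B_i'\subset U$ is a relatively compact open set containing a fundamental domain for $\Lambda_i$. I would prove (i)--(iii) directly, using the cusp vectors $w_i$ of \eqref{eq:ci}. The basic computation is this. For $g=ka_tu_xg_i$, we have $g w_i=c_i\,k a_t u_x e_0=c_ie^{-t}ke_0$, so $\|gw_i\|=\sqrt2\,c_ie^{-t}$. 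Thus the lattice $\pi(g)=g\cV(\bZ)$ contains a primitive vector of length comparable to $e^{-t}$, which is very short when $t>T_0$ is large.

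The key separation step is a lemma of the following form. If $T_0$ is large, then any $\Lambda\in\cX$ has at most one primitive isotropic line containing a vector of norm $<\delta$. To prove it, take two non-proportional vectors $v,w\in g\cV(\bZ)$. Then $B(v,w)\neq0$, where $B$ is the bilinear form of $Q$, since distinct isotropic lines are never orthogonal. Moreover $B(g^{-1}v,g^{-1}w)\in\tfrac12\bZ$, so $|B(v,w)|\ge\tfrac12$. On the other hand $|B(v,w)|\le\|v\|\|w\|$, so both vectors cannot be short.

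Granting this, (ii) and (i) follow. For (ii), if $\pi(S_ig_i)\ni\Lambda=\pi(S_jg_j)$, then the unique short isotropic line of $\Lambda$ equals both $hg_iw_i$-line and $h'g_j\gamma w_j$-line. Hence $g_i^{-1}\bR e_0$ and $\gamma g_j^{-1}\bR e_0$ are $\Gamma$-equivalent, which forces $i=j$. For (i), if $ka_tu_xg_i\gamma=k'a_{t'}u_{x'}g_i$, the same uniqueness shows that $g_i\gamma g_i^{-1}$ preserves the line $\bR e_0$ and fixes $e_0$, since it maps primitive vectors to primitive vectors. Therefore $g_i\gamma g_i^{-1}\in\Gamma_i\subset {}^0P=MU$. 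Comparing the Iwasawa coordinates gives $t=t'$ and $k'=km$ with $m\in M_i$. After that, $u_{x'}=m^{-1}u_xm\lambda$ with $\lambda\in\Lambda_i$. The fundamental domain properties of $W_i$ and $B_i$ then force $m=1$ and $\lambda=1$. I will need to check that conjugation by $M$ normalises $\Lambda_i$ compatibly, so that the combined action of $\Gamma_i$ on $KU$ is exactly the one described by $M_i$ and $\Lambda_i$.

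For (iii), I would apply Mahler-type compactness. The set of $\Lambda$ whose shortest nonzero vector has norm $\ge\delta$ is compact. This follows either by viewing $\cX$ inside the space of lattices spanned by $\cV(\bZ)$, since $\cV(\bZ)$ spans a lattice in $\bR^{d+2}$ and $G$ is a closed subgroup whose orbit is closed, or directly from reduction theory. If instead $\Lambda=g\cV(\bZ)$ has a short vector $v=g\gamma w_i$ for some $i$ and $\gamma$, I would write $g\gamma g_i^{-1}=ka_tu_xm$ in the form $KAU$. Shortness of $v$ forces $t$ to be large, and then right multiplication by $\Gamma_i$ moves $k$ into $W_i$ and $x$ into $B_i$. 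The main obstacle, I expect, is this last step. One must show that a short isotropic vector forces $t>T_0$ uniformly, while the $x$-adjustment by $\Lambda_i$ and the $M_i$-adjustment do not change $t$. This holds because $U$ and $M$ commute with the $A$-coordinate in $KAU$ on the right up to conjugation, that is, $a_tu_xm=m\,a_t\,(m^{-1}u_xm)$. The constants $\delta$ and $T_0$ must then be chosen compatibly with the separation lemma.
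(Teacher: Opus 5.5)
Your proposal is correct, but it takes a genuinely different route from the paper.

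\textbf{The paper's route.} The paper uses Garland--Raghunathan as a black box. It first argues, by enlarging the Siegel set, that the cusp representatives may be taken to be the rational $g_i$. It then extracts two facts:
\begin{itemize}
\item a covering $G=\big(\Theta\cup\bigcup_i\mathfrak{S}_{T_0,B}g_i\big)\Gamma$ with $\Theta$ relatively compact;
\item the Siegel property: if $\mathfrak{S}_{T_0,B}g_i\gamma\cap\mathfrak{S}_{T_0,B}g_j\neq\emptyset$, then $i=j$ and $g_i\gamma g_i^{-1}\in{}^0P$.
\end{itemize}
The only remaining work is to check that $S_i$ meets each $\Gamma_i$-orbit in $KA_{T_0}U$ exactly once.

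\textbf{Your route.} You derive both ingredients directly from the arithmetic of $Q$.
\begin{itemize}
\item The Siegel property comes from your separation lemma: the bilinear form is integral and nonzero on non-proportional vectors of $\mathcal{V}(\mathbb{Z})$, while $|B(v,w)|\le\|v\|\|w\|$. Combined with $\|ka_tu_xg_iw_i\|=\sqrt2\,c_ie^{-t}$, this gives (i) and (ii).
\item The covering comes from Mahler's criterion, together with the fact that every rational isotropic line is a $\Gamma$-translate of some $\mathbb{R}w_i$.
\end{itemize}
Your argument is more elementary and self-contained, and it makes $T_0$ and the compactness threshold explicit in terms of the $c_i$. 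It also avoids the slightly delicate replacement of the Garland--Raghunathan cusp representatives by rational ones. The paper's route is shorter and does not depend on the specific form $Q$.

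\textbf{Two steps to spell out when you write this up.}
\begin{enumerate}
\item In (iii), Mahler's criterion controls all vectors of the $\mathbb{Z}$-span $gL_0$ of $g\mathcal{V}(\mathbb{Z})$, not only those lying in $g\mathcal{V}(\mathbb{Z})$. You need one more observation: if $z\in L_0\subset\mathbb{Z}^{d+2}$ and $\|gz\|<1$, then $Q(z)=Q(gz)$ is an integer with $|Q(z)|\le\|gz\|^2<1$. Hence $z$ is isotropic and $\pm z\in\mathcal{V}(\mathbb{Z})$. This is what turns ``no short vector in $g\mathcal{V}(\mathbb{Z})$'' into a compactness condition on $g\Gamma$.
\item In (i), from $mu_y\in\Gamma_i$ you cannot assert $u_y\in\Lambda_i$ before knowing $m=1$. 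The correct order is:
\begin{itemize}
\item first, $k$ and $km$ both lie in $W_i$ with $m\in M_i$, so $m=1$;
\item only then is $u_y\in\Gamma_i\cap U=\Lambda_i$, and $x,\,x+y\in B_i$ gives $y=0$.
\end{itemize}
With this order, no separate check that $M$ normalises $\Lambda_i$ is needed.
\end{enumerate}
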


\begin{proof}
This is essentially the content of the reduction theory of Garland and
Raghunathan \cite{GarlandRaghunathan}; since disjointness is not discussed there,
we provide the additional details needed to arrange it.
The finite set of elements of $G$ which parametrises the cusps in
\cite{GarlandRaghunathan} is not required to be rational; as $\Gamma$ is arithmetic,
the associated parabolic subgroups are rational, and
enlarging the Siegel set, this set may be
taken to consist of the representatives $g_1,\dots,g_s$ of \eqref{eq:cusps}. With this
choice, $\Gamma\cap g_i^{-1}Ug_i$ is a lattice in
$g_i^{-1}Ug_i$ for every $i$, and there exist $T_0\geq 0$, a relatively
compact open set $B\subset U$ and a relatively compact set $\Theta\subset G$ with
the following properties:
\begin{enumerate}
\item[(a)] $\big(\Theta\cup\bigcup_{i=1}^{s}\mathfrak{S}_{T_0,B}\,g_i\big)\Gamma=G$;
\item[(b)] if
$\mathfrak{S}_{T_0,B}\,g_i\gamma\cap\mathfrak{S}_{T_0,B}\,g_j\neq\emptyset$
for some $\gamma\in\Gamma$, then $i=j$ and $g_i\gamma g_i^{-1}\in\,{}^0P$.
\end{enumerate}
Enlarging $B$, we may assume that $B_i\subset B$ for all $i$.
The group $\Gamma_i$ acts on $KA_{T_0}U$ on the right as
$$
(k,t,x)\cdot mu_y=(km,\; t,\; m^{-1}x+y),\qquad mu_y\in\Gamma_i\subset MU .
$$
Hence $S_i=W_iA_{T_0}B_i$ meets every orbit in $KA_{T_0}U$ exactly once,
so that $\pi(S_ig_i)=\pi(KA_{T_0}Ug_i)$. Moreover $S_i\subset\mathfrak{S}_{T_0,B}$,
since $B_i\subset B$; hence, if two points of $S_ig_i$ have the same image under
$\pi$, they differ by an element $\gamma\in\Gamma$ with $g_i\gamma g_i^{-1}\in\Gamma_i$
by (b), and are therefore equal. This gives (i). Assertion (ii) is the first half of
(b), and (iii) follows from (a), since
$\pi(\mathfrak{S}_{T_0,B}g_i)\subset\pi(S_ig_i)$.
\end{proof}


The following lemma shows that, when
the parameters $\tau$ and $\sigma$ are small, only the vectors lying on the rational isotropic
lines can contribute to $\Omega(\tau,\sigma)$ deep in the cusps.

\begin{lem}\label{lem:small vectors}
There exists $\theta_0>0$ such that for all $\tau,\sigma\in(0,\theta_0)$, all
$i=1,\dots,s$, all $g\in KA_{T_0}Ug_i$ and all $z\in \mathcal{V}(\bZ)$,
\[
\text{if}\quad gz \in \mathcal{C}(\tau,\sigma)\enskip,\enskip \text{then}\quad
g_iz\in\bR_{>0} e_0\enskip .
\]
\end{lem}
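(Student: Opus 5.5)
Write $g=ka_tu\,g_i$ with $k\in K$, $t>T_0$, $u\in U$, and put $y:=g_iz\in g_i\mathcal{V}(\bZ)$. Since $k$ fixes $e_{d+2}$ and is orthogonal, the quantities $L_+(ky')+L_-(ky')=2y'_{d+2}$ are $k$-invariant. The plan is to show that $a_tu\,y$ has small $e_{d+2}$-coordinate only when $y$ lies on the ray $\bR_{>0}e_0$.

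First, because $g_i\in G(\Q)$ and there are finitely many of them, there is an integer $D\ge1$ with $D\,g_i\mathcal{V}(\bZ)\subset\bZ^{d+2}$ for all $i$. Next, since $u\in U$ fixes $e_0$ and preserves $Q$, it preserves the bilinear pairing with $e_0$. Hence $L_+(uy)=\langle uy,e_0\rangle_Q$ up to sign, which equals $L_+(y)$ up to sign. More concretely, from the explicit form of $u_x$ one checks that $L_+(u_xy)=L_+(y)$, because the last two rows of $u_x$ differ by $(0,\dots,0,1,-1)$.

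From this we get the chain
\[
L_+(gz)\text{-type quantity}\ \ge\ \ldots\ \ge\ e^{t}L_+(y)\ \ge\ e^{T_0}L_+(y).
\]
Here we use \eqref{eq:scalling L_pm by a_t}, together with the fact that $k$ does not preserve $L_+$ separately but does preserve the sum $L_++L_-=2x_{d+2}$. Thus $gz\in\mathcal{C}(\tau,\sigma)$ gives
\[
2(gz)_{d+2}<\tau+\sigma,
\]
and therefore
\[
L_+(a_tuy)\le L_+(a_tuy)+L_-(a_tuy)=2(ka_tuy)_{d+2}<\tau+\sigma.
\]
Combining, $e^{T_0}L_+(y)\le e^tL_+(y)<2\theta_0$.

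Now $L_+(y)=y_{d+2}-y_{d+1}$ with $y\in D^{-1}\bZ^{d+2}$, so $L_+(y)\in D^{-1}\bZ_{\ge0}$. Choosing $\theta_0<e^{T_0}/(2D)$ forces $L_+(y)=0$. Since $y\in\mathcal{V}$ (as $g_i$ preserves $\mathcal{V}$, being in $G$), and $L_+(y)=0$ on $\mathcal{V}$ exactly for positive multiples of $e_0$, we conclude $y\in\bR_{>0}e_0$, i.e. $g_iz\in\bR_{>0}e_0$.

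The main obstacle is the middle step: justifying that $L_+(uy)=L_+(y)$ for $u\in U$. I would verify this directly from the matrix of $u_x$, where row $d+2$ minus row $d+1$ equals $e_{d+2}^t-e_{d+1}^t$. The other point needing care is that $T_0\ge0$ already gives $e^t\ge1$, so $\theta_0<1/(2D)$ suffices. No smallness of $\sigma$ is actually needed beyond bounding the sum $\tau+\sigma$.
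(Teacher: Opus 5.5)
Your proof is correct and follows essentially the same route as the paper: bound $L_+(a_tu\,g_iz)$ by a $K$-invariant quantity, then use the $U$-invariance of $L_+$, the scaling $L_+(a_t\,\cdot)=e^tL_+(\cdot)$, and the discreteness $L_+(g_i\mathcal{V}(\bZ))\subset D^{-1}\bZ_{\ge 0}$ to force $L_+(g_iz)=0$. The only difference is the invariant you use: the paper uses the Euclidean norm (preserved by orthogonal $k$) together with \eqref{eq:norm} and gets the bound $2\sqrt{d+2}\,\max(\tau,\sigma)$, whereas you use the $K$-invariant coordinate $x_{d+2}=\tfrac12(L_++L_-)$ and get the slightly cleaner bound $\tau+\sigma$.
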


\begin{proof}
Since $g_i$ is rational, there is $q_i\in\mathbb{N}$ with
$g_i\bZ^{d+2}\subset q_i^{-1}\bZ^{d+2}$, so that $L_+(g_iv)\in q_i^{-1}\bZ$ for every
$v\in\bZ^{d+2}$, and we set $\theta_0:=\big(2\sqrt{d+2}\big)^{-1}\min_{1\leq i\leq s}q_i^{-1}>0$.

Let $g=ka_tu_xg_i$ with $t>T_0\geq 0$, and let $z\in\mathcal{V}(\bZ)$ with
$gz\in\mathcal{C}(\tau,\sigma)$. Since $k$ is orthogonal, $\|a_tu_xg_iz\|=\|gz\|$,
and hence, using \eqref{eq:norm},
$$
L_+(a_tu_xg_iz)\leq 2\|a_tu_xg_iz\|_\infty\leq 2\|gz\|\leq
2\sqrt{d+2}\,\|gz\|_\infty< 2\sqrt{d+2}\,\max(\tau,\sigma).
$$
On the other hand, $L_+$ is invariant under $U$, so that by
\eqref{eq:scalling L_pm by a_t} for $t>0$,
$$
L_+(a_tu_xg_iz)=e^{t}L_+(g_iz)\geq L_+(g_iz).
$$
Since $\max(\tau,\sigma)<\theta_0$, we deduce that $L_+(g_iz)<q_i^{-1}$, whence
$L_+(g_iz)=0$, and therefore $g_iz\in\bR_{>0}e_0$, as observed after
\eqref{eq:scalling L_pm by a_t}.
\end{proof}

Using the parametrisation of a fundamental domain for $\Gamma$ from Proposition
\ref{prop:reduction}, we obtain the following exact formula for the
volume of the neighbourhoods of infinity in $\mathcal{X}$.

\begin{lem}\label{lem:volume Omega}
There exist $C_0>0$ and $\theta_1>0$, depending only on the dimension $d$, such that
\begin{equation}\label{eq:volume identity}
\mu\big(\Omega(\tau,\sigma)\big)=C_0\,\mu_{\mathcal{V}}\big(\mathcal{C}(\tau,\sigma)\big)
\qquad\hbox{for all $\tau,\sigma\in(0,\theta_1)$.}
\end{equation}
\end{lem}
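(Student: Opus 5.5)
The plan is to split $\mathcal{X}$ using Proposition \ref{prop:reduction} and show that for small $\tau,\sigma$ the set $\Omega(\tau,\sigma)$ avoids the compact part $\Omega_0$. In each cusp we will then compute its measure exactly in the coordinates $g=ka_tu_xg_i$, using \eqref{decomposition Haar muG} and \eqref{decomposition muV}.

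\textbf{Step 1: the compact part.} The closure of $\Omega_0$ is compact. By Mahler-type compactness (continuity of $g\mapsto\min\{\|gz\|: z\in\mathcal{V}(\bZ)\}$ on $G$, which is positive), there is $r_0>0$ with $\|v\|\ge r_0$ for all $v\in\Lambda$ and all $\Lambda\in\Omega_0$. By \eqref{eq:norm}, every $x\in\mathcal{C}(\tau,\sigma)$ satisfies $\|x\|\le\sqrt{d+2}\max(\tau,\sigma)$. Hence for $\tau,\sigma$ small we get $\Omega(\tau,\sigma)\cap\Omega_0=\emptyset$, and so
$$\Omega(\tau,\sigma)\subset\bigsqcup_{i=1}^s\pi(S_ig_i).$$
We choose $\theta_1\le\theta_0$ small enough for this.

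\textbf{Step 2: description inside a cusp.} Let $g=ka_tu_xg_i\in S_ig_i$, and let $\Lambda=\pi(g)=g\mathcal{V}(\bZ)$. By Lemma \ref{lem:small vectors}, any $z\in\mathcal{V}(\bZ)$ with $gz\in\mathcal{C}(\tau,\sigma)$ satisfies $g_iz\in\bR_{>0}e_0$, so $z=mw_i$ with $m\in\mathbb{N}$. By \eqref{eq:ci}, and since $U$ fixes $e_0$,
$$gz=m\,c_i\,k a_tu_xe_0=m\,c_ie^{-t}\,ke_0 .$$
Since $L_\pm\ge0$ are linear, $\mathcal{C}(\tau,\sigma)$ is invariant under scaling by factors in $(0,1]$. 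Therefore some $m$ works if and only if $m=1$ works, and
$$\pi(g)\in\Omega(\tau,\sigma)\iff c_ie^{-t}ke_0\in\mathcal{C}(\tau,\sigma).$$
This condition depends only on $(kM,t)$ and not on $x$. Moreover, for small $\theta_1$ it forces $e^{-t}$ to be small, so the constraint $t>T_0$ is automatic.

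\textbf{Step 3: integration.} By Proposition \ref{prop:reduction}(i), $\pi$ is injective on $S_ig_i$. Right translation by $g_i$ preserves $\mu_G$, since $G$ is unimodular. Hence
$$\mu(\Omega\cap\pi(S_ig_i))=c_G\int_{W_i}\int_{T_0}^\infty\int_{B_i}\mathbf 1\big[c_ie^{-t}ke_0\in\mathcal{C}\big]\,e^{-dt}\,dx\,dt\,d\mu_K(k).$$
The $x$-integral gives $\mathrm{vol}(B_i)$. The finite group $M_i$ acts freely on $K$ by right multiplication, and the integrand is right $M$-invariant, so the $W_i$-integral equals $|M_i|^{-1}$ times the integral over $K$. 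Substituting $t\mapsto t+\ln c_i$ and comparing with \eqref{decomposition muV}, the remaining integral equals $c_i^{-d}\mu_{\mathcal{V}}(\mathcal{C}(\tau,\sigma))$. Summing over $i$ gives
$$C_0=c_G\sum_{i=1}^s\frac{\mathrm{vol}(B_i)}{|M_i|\,c_i^{d}} ,$$
which depends only on the fixed data, that is, on $d$.

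\textbf{Main obstacle.} The delicate point is Step 1, together with the claim that $t>T_0$ is automatic. This needs a uniform lower bound for vector lengths over the compact set $\overline{\Omega_0}$. I would obtain it by noting that $\mathcal{V}(\bZ)$ is discrete, so $\min_{z\ne0}\|gz\|$ is positive and lower semicontinuous in $g$. Hence it has a positive minimum on a compact lift of $\overline{\Omega_0}$.
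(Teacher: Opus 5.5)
Your proposal is correct and follows essentially the same route as the paper's proof. You exclude $\Omega_0$ by a uniform lower bound on vector lengths, use Lemma \ref{lem:small vectors} and scaling to reduce the hitting condition in the $i$-th cusp to $c_ie^{-t}ke_0\in\mathcal{C}(\tau,\sigma)$, and integrate in the coordinates $ka_tu_x$ to obtain the same constant $C_0=c_G\sum_i \mathrm{vol}(B_i)/(|M_i|\,c_i^{d})$. The one point you leave implicit, that $t>T_0$ is automatic, is handled in the paper by the identity $2c_ie^{-t}=L_+(c_ie^{-t}ke_0)+L_-(c_ie^{-t}ke_0)<\tau+\sigma$, which is why $\theta_1$ also includes the factor $e^{-T_0}\min_i c_i$.
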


\begin{proof}
Since $\Omega_0$ is relatively compact, Mahler's compactness criterion provides
$\rho>0$ such that $\|\lambda\|_\infty\geq\rho$ for every $\Lambda\in\Omega_0$ and
every $\lambda\in\Lambda$. As $\mathcal{C}(\tau,\sigma)\subset\{x\in\mathcal{V}:\,
\|x\|_\infty<\max(\tau,\sigma)\}$ by \eqref{eq:norm}, it follows that
$\Omega(\tau,\sigma)\cap\Omega_0=\emptyset$ whenever $\tau,\sigma\in(0,\rho)$. We set
$\theta_1:=\min\big(\theta_0,\,\rho,\,e^{-T_0}\min_ic_i\big)$ and assume from
now on that $\tau,\sigma\in(0,\theta_1)$.

By Proposition \ref{prop:reduction}, every
$\Lambda\in\Omega(\tau,\sigma)$ is of the form $\Lambda=hg_i\mathcal{V}(\bZ)$ for a
unique $i\in\{1,\dots,s\}$ and a unique $h=ka_tu_x\in S_i$, and the condition
$\Lambda\cap\mathcal{C}(\tau,\sigma)\neq\emptyset$ reads
$hg_iz\in\mathcal{C}(\tau,\sigma)$ for some $z\in\mathcal{V}(\bZ)$. By Lemma
\ref{lem:small vectors}, such a $z$ satisfies $g_iz\in\bR_{>0}e_0$, so
that $g_iz=mc_ie_0$ with $m\in\mathbb{N}$ by \eqref{eq:ci}. Since $u_xe_0=e_0$ and
$a_te_0=e^{-t}e_0$, we get $hg_iz=mc_ie^{-t}ke_0$; as $L_\pm$ are non-negative on
$\mathcal{V}$, the set $\mathcal{C}(\tau,\sigma)$ is stable under multiplication by
scalars in $(0,1]$, so that such an $m$ exists if and only if $m=1$ qualifies. We
conclude that
$$
\Omega(\tau,\sigma)=\bigsqcup_{i=1}^{s}
\pi\Big(\{ka_tu_xg_i\in S_ig_i:\; c_ie^{-t}ke_0\in\mathcal{C}(\tau,\sigma)\}\Big).
$$
Using Proposition \ref{prop:reduction}(i) and the right invariance of
$\mu_G$, and substituting $t=r+\ln c_i$, we obtain
\begin{align*}
\mu\big(\Omega(\tau,\sigma)\big)&=c_G\sum_{i=1}^{s}\int_{W_i}\int_{T_0}^{\infty}\int_{B_i}
\chi_{\{c_ie^{-t}ke_0\in\mathcal{C}(\tau,\sigma)\}}\; e^{-dt}\,dx\,dt\,d\mu_K(k)\\
&=c_G\sum_{i=1}^{s}\frac{\mathrm{vol}(B_i)}{c_i^{\,d}}
\int_{W_i}\int_{T_0-\ln c_i}^{\infty}
\chi_{\{e^{-r}ke_0\in\mathcal{C}(\tau,\sigma)\}}\; e^{-dr}\,dr\,d\mu_K(k),
\end{align*}
where $\mathrm{vol}$ denotes the Lebesgue measure on $U\cong\bR^d$.
If $e^{-r}ke_0\in\mathcal{C}(\tau,\sigma)$, then
$$
2e^{-r}=L_+(e^{-r}ke_0)+L_-(e^{-r}ke_0)<\tau+\sigma<2e^{-T_0}c_i,
$$
so that $r>T_0-\ln c_i$; the range of integration in $r$ may therefore be replaced by $\bR$.
Now \eqref{eq:volume identity} follows from
\eqref{decomposition muV}, with
$$
C_0:=c_G\sum_{i=1}^{s}\frac{\mathrm{vol}(B_i)}{|M_i|\,c_i^{\,d}}.
$$
\end{proof}

From now on we assume that the constant $\sigma_0$ of \eqref{eq:eta} satisfies
$\sigma_0<\theta_1$.

\section{Proof of the main results}
\label{sec:Poisson}
We show now how equidistribution of all orders can be used to establish Poisson asymptotics in the setting of shrinking targets in the space of orthogonal lattices. We first recall a very general criterion for Poisson approximation (Proposition \ref{prop:poisson criterion}) using the classical method of moments, then recall the exponential equidistribution of all orders of the measure $\nu$ towards the measure $\mu$ under the action of $A$, established by the second author in \cite{ouaggag2024clt}. The main result of this section is Theorem \ref{thm:poisson N}, which establishes that the number of visits to shrinking targets in an exponential equidistribution setting is asymptotically Poissonian; Theorems \ref{th:weibull} and \ref{th:poisson} follow from it by Lemmas \ref{lem:tilde kt tilde etaT} and \ref{lem:poisson tilde kt tilde etaT}. We follow an approach similar to that of \cite{WeibullBG}.

\subsection{General criterion for Poisson convergence}
We recall a general criterion for convergence to the Poisson distribution. Let $(\mathcal{Z},\nu)$ be a probability space (the criterion will be applied with
$\mathcal{Z}=\cX$ and the measure $\nu$ of Section \ref{sec:notation}) and let $H$ be an infinite set.  Let $(F_n)$ be a sequence of finite subsets of $H$ such that $|F_n| \ra \infty$.  Suppose that for every 
$n$ and $h \in F_n$,  we are given a measurable subset $A_{n,h} \subset \mathcal{Z}$.  Let
\begin{equation}
\label{def N general}
\mathcal{N}_{F_n}(z) := \#\big\{ h \in F_n \,  : \,  z \in A_{n,h} \big\},  \quad z \in \mathcal{Z}. 
\end{equation}
For positive integers $r,n$ and a real number $b \geq 0$, we define
\begin{equation}
\Delta_{n,r}(b) :=  \max\Big\{ \big| \,  |F_n|^r \cdot \nu\Big( \bigcap_{h \in F'} A_{n,h} \Big) - b^r \big| \,  : \,  F' \subset F_n, \enskip |F'| = r \Big\}.
\end{equation}
If $|F_n| < r$,   we set $\Delta_{n,r}(b) = 0$.  

\begin{prop}
[\cite{WeibullBG}]
\label{prop:poisson criterion}
Suppose that there exists $b \geq 0$ such that
\[
\lim_{n \ra \infty} \Delta_{n,r}(b) = 0,  \quad \textrm{for all $r \geq 1$}.
\]
Then $\mathcal{N}_{F_n} \xLongrightarrow[\nu]{} \Poi(b)$ as $n \ra \infty$.
\end{prop}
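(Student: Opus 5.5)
We prove Proposition \ref{prop:poisson criterion} by the method of factorial moments. For a nonnegative integer-valued random variable the factorial moments are $\mathbb{E}\big[\binom{\mathcal{N}}{r}\big]$, and for $\Poi(b)$ one has $\mathbb{E}\big[\binom{N}{r}\big]=b^r/r!$. Since the Poisson distribution is determined by its moments (its moment generating function is entire), it suffices to show that $\int\binom{\mathcal{N}_{F_n}}{r}\,d\nu\to b^r/r!$ for every $r\ge1$. Convergence of all moments to those of a moment-determinate law then gives convergence in distribution. The moments are integer combinations of the factorial moments, and the family $(\mathcal{N}_{F_n})$ is tight because its first moments are bounded.

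The key identity is
\[
\binom{\mathcal{N}_{F_n}(z)}{r}=\sum_{F'\subset F_n,\ |F'|=r}\chi_{\bigcap_{h\in F'}A_{n,h}}(z),
\]
which holds because both sides count the $r$-element subsets of $\{h\in F_n: z\in A_{n,h}\}$. Integrating against $\nu$ gives
\[
\int\binom{\mathcal{N}_{F_n}}{r}\,d\nu=\sum_{|F'|=r}\nu\Big(\bigcap_{h\in F'}A_{n,h}\Big).
\]
By definition of $\Delta_{n,r}(b)$, each summand equals $|F_n|^{-r}\big(b^r+O(\Delta_{n,r}(b))\big)$. There are $\binom{|F_n|}{r}$ summands, so the whole expression equals
\[
\binom{|F_n|}{r}|F_n|^{-r}\big(b^r+O(\Delta_{n,r}(b))\big).
\]
As $|F_n|\to\infty$ we have $\binom{|F_n|}{r}|F_n|^{-r}\to 1/r!$, and $\Delta_{n,r}(b)\to0$ by hypothesis, so the $r$-th factorial moment tends to $b^r/r!$.

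To finish, write $x^k=\sum_{r\le k}S(k,r)\,r!\binom{x}{r}$ with Stirling numbers $S(k,r)$. This shows that every moment $\int\mathcal{N}_{F_n}^k\,d\nu$ converges to $\sum_r S(k,r)b^r$, which is the $k$-th moment of $\Poi(b)$. The method of moments then yields $\mathcal{N}_{F_n}\Rightarrow\Poi(b)$.

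No step is a serious obstacle. The only point needing care is invoking moment determinacy of the Poisson law, which follows from the finiteness of $\mathbb{E}e^{sN}=e^{b(e^s-1)}$ for all $s$. Alternatively, one can avoid the moment problem with an explicit inclusion–exclusion (Bonferroni) bound: for each fixed $k$ and $R$,
\[
\nu(\mathcal{N}=k)=\sum_{r=k}^{R}(-1)^{r-k}\binom{r}{k}\mathbb{E}\binom{\mathcal{N}}{r}
\]
holds up to an error whose sign alternates with $R$. Letting $n\to\infty$ and then $R\to\infty$ recovers $e^{-b}b^k/k!$.
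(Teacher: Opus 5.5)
Your proof is correct and is essentially the method-of-moments argument behind this criterion in \cite{WeibullBG}, which the paper cites without reproving. The $r$-th factorial moment $\int\binom{\mathcal{N}_{F_n}}{r}\,d\nu=\sum_{|F'|=r}\nu\big(\bigcap_{h\in F'}A_{n,h}\big)$ tends to $b^r/r!$ because $\Delta_{n,r}(b)\to0$, and moment determinacy of $\Poi(b)$ finishes the proof; your Bonferroni variant is also valid and avoids the moment problem altogether.
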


\subsection{Quantitative multiple equidistribution}\label{subsec:equidistribution}

We now recall the equidistribution estimate from \cite{ouaggag2024clt} in the form in
which it will be used; its error term is explicit in terms of the $C^l$-norms of
the test functions on $\mathcal{X}$, which we introduce first.

Every $Y \in \mathrm{Lie}(G)$ defines a first-order differential operator $D_Y$ on $C^{\infty}(\mathcal{X})$ by
$$
D_Y(\phi)(x) := \frac{d}{ds}\phi(\exp (sY)x)\big|_{s=0}.
$$

If $\{ Y_1,\dots, Y_k\}$ is a basis of $\mathrm{Lie}(G)$, then every monomial $Z=Y_1^{l_1}\dots Y_k^{l_k}$ defines a differential operator by 
\begin{equation}
\label{diff operator}
D_Z:= D_{Y_1}^{l_1}\dots D_{Y_k}^{l_k},
\end{equation}
of degree $\deg(Z)=l_1+\dots+l_k$. For integers $l\geq 0$ and $\phi \in C^{\infty}(\mathcal{X})$, we write
\begin{equation}\label{def C_l norm}
   \|\phi\|_{C^l}:= \sum_{\deg(Z)\leq l} \|D_Z(\phi)\|_{\infty}.
\end{equation}
The same notation is used for functions on $G$.

With this notation, the equidistribution estimate takes the following form.

\begin{thm}[\cite{ouaggag2024clt}]
\label{thm:equidistribution}
Let $d\geq 3$. For every $r\geq 1$, there exist $\beta_r>0$, $q_r\geq 1$ and $C_r>0$ such that for every $\varphi_1,\dots,\varphi_r \in C_{c}^{\infty}(\mathcal{X})$ and $t_1,\dots,t_r > 0$, 
$$
    \left|  \int_{\mathcal{X}} \left(\displaystyle\prod_{i=1}^r\varphi_i (a_{t_i} x)\right) d\nu(x) -  \prod_{i=1}^{r}\int_{\mathcal{X}} \varphi_i d\mu \right| \leq C_r e^{-\beta_r D_r(t_1,\dots,t_r)} \prod_{i=1}^{r}\|\varphi_i\|_{C^{q_r}},
$$
where 
\begin{equation} \label{eq:def spread D}
    D_r(t_1,\dots,t_r):= \min\{t_i,|t_i-t_j|:1\leq i\neq j\leq r\}.
\end{equation}
\end{thm}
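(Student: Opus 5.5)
The plan is to reduce Theorem \ref{thm:equidistribution} to exponential mixing of all orders for the $G$-action on $\cX$. This reduction is a thickening argument in the spirit of Kleinbock--Margulis, and I would run it as an induction-free, one-shot smoothing.

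\emph{Step 1: rewrite $\nu$ as a $K$-orbit measure.} For every $\alpha$, the set of $k\in K$ with $k(\alpha,1)=e_0$ is exactly $Mk_\alpha$. Moreover $M$ commutes with $A$. Hence for $\varphi_i\in C_c^\infty(\cX)$, after replacing each $\varphi_i$ by its average over $M$ (this does not increase $C^{q}$-norms, since $\mathrm{Ad}(M)$ is compact), we get
$$
\int_{\cX}\prod_{i}\varphi_i(a_{t_i}x)\,d\nu(x)=\int_K\prod_i\varphi_i(a_{t_i}k\,x_0)\,d\mu_K(k),\qquad x_0=\mathcal{V}(\bZ).
$$
So it suffices to treat the translated $K$-orbit.

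\emph{Step 2: thicken in the contracted directions.} Let $P^-=MAU^-$ be the parabolic subgroup whose unipotent radical $U^-$ satisfies $a_tua_{-t}\to \id$ for $t\to+\infty$. This is the opposite of $U$ or $U$ itself, depending on the sign convention; I would check this and pick the one that is contracted. The map $P^-\times K\to G$, $(p,k)\mapsto pk$, is a local diffeomorphism near the identity, and the Haar measure decomposes locally as a smooth density times $d\mu_{P^-}\,d\mu_K$. Fix a bump $\rho_\eps$ on $P^-$ supported in an $\eps$-neighbourhood of $\id$, with $\int\rho_\eps=1$ and $\|\rho_\eps\|_{C^l}\ll \eps^{-l-\dim P^-}$. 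Since $t_i>0$, we have $\|a_{t_i}pa_{-t_i}-\id\|\ll\eps$ for $p\in\supp\rho_\eps$: conjugation contracts $U^-$ and fixes $MA$. Therefore
$$
\prod_i\varphi_i(a_{t_i}pk\,x_0)=\prod_i\varphi_i\big((a_{t_i}pa_{-t_i})a_{t_i}kx_0\big)=\prod_i\varphi_i(a_{t_i}kx_0)+O\Big(\eps\prod_i\|\varphi_i\|_{C^1}\Big).
$$
Averaging over $p$ turns the $K$-integral into $\int_G\psi_\eps(g)\prod_i\varphi_i(a_{t_i}gx_0)\,d\mu_G(g)$ up to $O(\eps)$. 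Here $\psi_\eps$ is a smooth compactly supported function near $\id$ with $\int\psi_\eps=1$ and $\|\psi_\eps\|_{C^l}\ll\eps^{-N_l}$. Pushing $\psi_\eps$ to $\cX$ via $g\mapsto gx_0$ (injective for small $\eps$) gives a function $\tilde\psi_\eps\in C_c^\infty(\cX)$. We are then looking at the $(r+1)$-fold correlation $\int_{\cX}\tilde\psi_\eps\prod_i\varphi_i\circ a_{t_i}\,d\mu$ with times $0,t_1,\dots,t_r$.

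\emph{Step 3: apply exponential multiple mixing.} The group $G$ is simple with finite centre, and $L^2_0(\cX)$ has a spectral gap, since $\Gamma$ is arithmetic. So the Björklund--Einsiedler--Gorodnik theorem on exponential mixing of all orders applies. It gives
$$
\Big|\int_{\cX}\tilde\psi_\eps\prod_i\varphi_i\circ a_{t_i}\,d\mu-\int\tilde\psi_\eps\,d\mu\prod_i\int\varphi_i\,d\mu\Big|\ll e^{-\delta_r D_r(t_1,\dots,t_r)}\,\|\tilde\psi_\eps\|_{C^{q}}\prod_i\|\varphi_i\|_{C^{q}}.
$$
The minimal spread of $\{0,t_1,\dots,t_r\}$ is exactly $D_r(t_1,\dots,t_r)$ from \eqref{eq:def spread D}, because it includes the $t_i$ themselves. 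The total error is then $\ll(\eps+\eps^{-N}e^{-\delta_rD_r})\prod_i\|\varphi_i\|_{C^{q}}$. Choosing $\eps=e^{-\delta_rD_r/(N+1)}$ yields the claim with $\beta_r=\delta_r/(N+1)$. When $D_r$ is small the bound is trivial, after enlarging $C_r$.

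\emph{Main obstacle.} The delicate point is not the smoothing bookkeeping. It is the input of Step 3: one needs exponential mixing of all orders with norms that are uniform over the non-compact space $\cX$. The test functions $\varphi_i$ here are $C^q$ but may sit in the cusp, so one must control Sobolev norms against the injectivity radius. This is also where I expect the restriction $d\ge3$ of \cite{ouaggag2024clt} to enter, through the spectral and Sobolev estimates used for the cusp. If the BEG theorem does not directly apply with $C^q$-norms, I would first try to truncate the $\varphi_i$ in the cusp using the Siegel-set description of Proposition \ref{prop:reduction}. The truncation error would be bounded by the volume of deep cusp regions, via Lemma \ref{lem:volume Omega}, and I would then rebalance the truncation height against $\eps$.
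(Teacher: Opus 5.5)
The paper does not prove this statement; it imports it from \cite{ouaggag2024clt}, so I am judging your argument on its own terms. Your overall route is the standard one and is sound in outline: thicken the measure in the non-expanded directions, then apply exponential mixing of all orders with the thickened density placed at time $0$, which is what produces the terms $t_i$ in $D_r$. However, Step~1 has a genuine gap.

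The identity $\int_{\cX}\prod_i\varphi_i(a_{t_i}x)\,d\nu(x)=\int_K\prod_i\varphi_i(a_{t_i}k\,\cV(\bZ))\,d\mu_K(k)$ is correct when every $\varphi_i$ is $M$-invariant. You cannot reduce to that case by replacing $\varphi_i$ with $\bar\varphi_i:=\int_M\varphi_i(m\,\cdot)\,d\mu_M(m)$. This replacement leaves the main term $\prod_i\int\varphi_i\,d\mu$ unchanged, but it changes the left-hand side, for two reasons.
\begin{itemize}
\item $\nu$ is carried by the $d$-dimensional set $\{k_\alpha\cV(\bZ):\alpha\in S^d\}$ and is not $M$-invariant, since $M\cong\SO(d)$ is non-trivial for $d\ge 2$.
\item Even for an $M$-invariant measure, a product of separate $M$-averages is not the $M$-average of the product.
\end{itemize}
So Steps~1--3 establish the estimate only for $M$-invariant test functions. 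That would in fact suffice for the application in Section~\ref{sec:Poisson}, since the sets $\Omega_n^\pm(\tau)$ are $M$-invariant and one may take $\rho_\eps$ invariant under conjugation by $M$. It is not, however, the theorem as stated.

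The repair is to skip Step~1 and thicken $\nu$ itself, using all of $P=MAU$ instead of $K$.
\begin{itemize}
\item The formula for $u_x$ gives $a_tu_xa_{-t}=u_{e^{-t}x}$. So $U$ itself is the contracted group, and conjugation by $a_t$, $t\ge 0$, moves an $\eps$-neighbourhood of $\id$ in $P$ by only $O(\eps)$.
\item The map $(p,\alpha)\mapsto pk_\alpha$ is injective, because $P\cap K=M$ and $k_\alpha^{-1}e_0=(\alpha,1)$.
\item Since $\dim P+d=\dim G$, this map is a diffeomorphism from $P\times(S^d\setminus\{-e_{d+1}\})$ onto a conull open subset of $G$. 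The $M$-directions supply exactly the part of $K$ that the section $\alpha\mapsto k_\alpha$ misses.
\item The only new issue is that this section is discontinuous at $-e_{d+1}$. Insert a smooth cutoff removing a $\kappa$-ball around that point. This costs $O(\kappa^d\prod_i\|\varphi_i\|_\infty)$ and loses a power of $\kappa^{-1}$ in the norms of the thickened density.
\end{itemize}
Your Step~3 then goes through verbatim, optimising $\eps$ and $\kappa$ together.

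Step~2 also needs two smaller corrections.
\begin{itemize}
\item $P^-\times K\to G$ has fibres of dimension $\dim M$, so it is not a local diffeomorphism. With $K$ you can only thicken by $AU$.
\item The thickened density lives near $K$ (or near the section), not near $\id$. Injectivity of $g\mapsto g\cV(\bZ)$ is not needed anyway, since periodising a density supported in a fixed compact set involves a uniformly bounded number of terms.
\end{itemize}

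Finally, your ``main obstacle'' is not where the difficulty lies. The Bj\"orklund--Einsiedler--Gorodnik theorem is already formulated for non-uniform lattices with $C^k$ sup-norms, so no cusp truncation is needed. Nothing in this route uses $d\ge 3$ either.
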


We shall apply this estimate to functions in $C^{\infty}(\mathcal{X})$ with finite
$C^{q_r}$-norm, to which it extends by a routine approximation argument.

\subsection{Completion of the proofs}\label{subsec:completion}

Using Lemma \ref{lem:Weps}, we construct approximations for the sets $\mathcal{C}(\delta_n\tau,\sigma_0)$ for a sequence $\delta_n\to 0$.

\begin{lem}\label{lem:volume stability}
Let $\tau>0$, let $(\delta_n)$ be a sequence of positive numbers with $\delta_n\to 0$, and set $\eps_n:=\delta_n^2$. With $c>0$ as in Lemma \ref{lem:Weps} and
\begin{equation} \label{def Cn pm}
\mathcal{C}_n^{\pm} (\tau):= \mathcal{C}\big(\delta_n\tau\pm c\eps_n,\;\sigma_0\pm c\eps_n\big),
\end{equation}
the inclusions
\begin{equation}\label{eq:inclusion}
    W_{\eps_n}\cdot \mathcal{C}_n^-(\tau) \subset \mathcal{C}(\delta_n \tau,\sigma_0) \quad\quad\hbox{and}\quad \quad W_{\eps_n}\cdot  \mathcal{C}(\delta_n \tau,\sigma_0)\subset \mathcal{C}_n^+(\tau)
\end{equation}
hold for all $n$ large enough, and
\[
\lim_{n \ra \infty} \delta_n^{-d/2} \cdot \mu_{\mathcal{V}}\big(\mathcal{C}_n^{\pm}(\tau)\big) =C_d (\sigma_0\tau)^{d/2},
\]
with $C_d>0$ as in Lemma \ref{lem:volume regularity}.
\end{lem}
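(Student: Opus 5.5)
The inclusions follow from Lemma \ref{lem:Weps} and the monotonicity of $\mathcal{C}(\tau,\sigma)$ in both parameters; the limit follows from the explicit formula of Lemma \ref{lem:volume regularity}. Throughout we take $n$ large enough that $\delta_n\tau+c\eps_n\le 1$, $\sigma_0+c\eps_n\le 1$ and $\delta_n\tau-2c\eps_n>0$. All three hold eventually: $\eps_n=\delta_n^2=o(\delta_n)$, and $\sigma_0<\theta_1$ is small.

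For the second inclusion, the parameters $\delta_n\tau$ and $\sigma_0$ lie in $(0,1]$. Lemma \ref{lem:Weps} with $\eps=\eps_n$ therefore gives
\[
W_{\eps_n}\cdot\mathcal{C}(\delta_n\tau,\sigma_0)\subseteq \mathcal{C}(\delta_n\tau+c\eps_n,\sigma_0+c\eps_n)=\mathcal{C}_n^+(\tau).
\]
For the first inclusion, the parameters $\delta_n\tau-c\eps_n$ and $\sigma_0-c\eps_n$ are positive and at most $1$. Lemma \ref{lem:Weps} then gives
\[
W_{\eps_n}\cdot\mathcal{C}_n^-(\tau)\subseteq \mathcal{C}(\delta_n\tau,\sigma_0).
\]
Strictly speaking, Lemma \ref{lem:Weps} is stated for parameters in $(0,1]$ and positivity is all it needs; the lower bound $\delta_n\tau-2c\eps_n>0$ only provides a margin.

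For the volumes, Lemma \ref{lem:volume regularity} gives the exact value
\[
\delta_n^{-d/2}\mu_{\mathcal{V}}\big(\mathcal{C}_n^\pm(\tau)\big)=C_d\,\delta_n^{-d/2}\big((\delta_n\tau\pm c\delta_n^2)(\sigma_0\pm c\delta_n^2)\big)^{d/2}=C_d\big((\tau\pm c\delta_n)(\sigma_0\pm c\delta_n^2)\big)^{d/2}.
\]
As $\delta_n\to0$ this tends to $C_d(\sigma_0\tau)^{d/2}$.

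The only delicate point is that the first parameter shrinks. We need the perturbation $c\eps_n$ to be negligible compared with $\delta_n\tau$, so that the sets $\mathcal{C}^-_n(\tau)$ are nonempty and have the correct asymptotic volume. This is exactly why $\eps_n=\delta_n^2$ is chosen rather than a quantity comparable to $\delta_n$.
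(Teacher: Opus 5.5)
Your proof is correct and is essentially the paper's argument. Both inclusions come from Lemma \ref{lem:Weps}, applied to $\mathcal{C}(\delta_n\tau,\sigma_0)$ and to $\mathcal{C}_n^-(\tau)$ once $\delta_n\tau-c\eps_n>0$, and the limit comes from the exact formula of Lemma \ref{lem:volume regularity}. You additionally check explicitly that the parameters lie in $(0,1]$, which the paper leaves implicit; your extra margin $\delta_n\tau-2c\eps_n>0$ is harmless but unnecessary.
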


\begin{proof}
The inclusions \eqref{eq:inclusion} follow from Lemma \ref{lem:Weps} and the monotonicity of $\mathcal{C}$ in each of its parameters. By Lemma \ref{lem:volume regularity},
$$
\delta_n^{-d/2} \mu_{\mathcal{V}}\big(\mathcal{C}_n^{\pm}(\tau)\big)
=C_d\big((\tau\pm c\delta_n)(\sigma_0\pm c\delta_n^2)\big)^{d/2}
\longrightarrow C_d (\sigma_0\tau)^{d/2},
$$
as required.
\end{proof}

It will be convenient to extend the \emph{spread} $D_r$ defined in \eqref{eq:def spread D} to a function 
$\widetilde{D}_r$ defined on all finite subsets $\bT$ of $\bR_+$ by 
\begin{align*}
    &\widetilde{D}_r(\bT) := \min\big\{ D_r(t_1,\dots,t_r) \,  : \,  t_1,\dots,t_r \hbox{ distinct elements of } \bT\big\}\\
    \qand & \widetilde{D}_r(\bT) := 0, \text{ if } |\bT| < r.
\end{align*}
If a sequence $(\bT_n)$ satisfies the sparseness condition \eqref{eq:sparse},
then $\widetilde{D}_r(\bT_n)/\log |\bT_n|\ra\infty$ for all $r\geq 1$.
We shall use the counting function $\mathcal{N}_{\bT}$ of \eqref{def N} in the form
\begin{equation*}
\mathcal{N}_{\bT}(\Lambda ,\tau) =
\# \big\{ t \in \bT \,  : \,  a_t\Lambda \cap \mathcal{C}(\tau,\sigma_0) \neq \emptyset \big\},
\qquad \Lambda\in\cX,\;\tau> 0,
\end{equation*}
and can now state and prove the main result of this section.

\begin{thm}
\label{thm:poisson N}
Let $d\geq 3$. There exists $m>0$ such that, for every sequence $(\bT_n)$ of finite
subsets of $\bR_+$ satisfying the sparseness condition \eqref{eq:sparse},
we have
$$ 
\mathcal{N}_{\bT_n}(\cdot,|\bT_n|^{-2/d} \tau)\xLongrightarrow[\nu]{} \Poi(m\tau^{d/2}),
\quad\hbox{for all $\tau> 0$.}
$$
In particular,
$$
|\bT_n|^{2/d}\cdot\widetilde{\eta}_{\bT_n} \xLongrightarrow[\nu]{} \Wei(m^{-2/d},d/2).
$$
\end{thm}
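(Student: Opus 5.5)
We apply Proposition \ref{prop:poisson criterion} with $\mathcal{Z}=\cX$, $H=\bR_+$, $F_n=\bT_n$, $\delta_n:=|\bT_n|^{-2/d}$ and $A_{n,t}:=\{\Lambda:\ a_t\Lambda\in\Omega(\delta_n\tau,\sigma_0)\}$, so that $\mathcal{N}_{F_n}=\mathcal{N}_{\bT_n}(\cdot,\delta_n\tau)$. We will take $b:=C_0C_d(\sigma_0\tau)^{d/2}$, i.e.\ $m:=C_0C_d\sigma_0^{d/2}$. Thus we must show that for every fixed $r\ge1$,
\[
|\bT_n|^r\,\nu\Big(\bigcap_{i=1}^r a_{t_i}^{-1}\Omega(\delta_n\tau,\sigma_0)\Big)\to b^r
\]
uniformly over $r$-subsets $\{t_1,\dots,t_r\}\subset\bT_n$. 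The Weibull statement then follows exactly as in the introduction: $\{\widetilde\eta_{\bT_n}<\delta_n\xi\}=\{\mathcal N_{\bT_n}(\cdot,\delta_n\xi)>0\}$, which gives limit $1-e^{-m\xi^{d/2}}=W_{m^{-2/d},d/2}(\xi)$.

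\emph{Smoothing.} Fix a bump function $\rho_n\ge0$ on $G$ supported in $W_{\eps_n}$ with $\int\rho_n\,d\mu_G=1$ and $\|\rho_n\|_{C^q}\ll\eps_n^{-\dim G-q}$. Let $\chi^\pm_n$ denote the indicators of $\Omega^\pm_n:=\{\Lambda:\Lambda\cap\mathcal{C}^\pm_n(\tau)\ne\emptyset\}$, and set $\varphi_n^\pm:=\rho_n*\chi^\pm_n$, where $(\rho*\chi)(\Lambda)=\int\rho(g)\chi(g^{-1}\Lambda)\,d\mu_G(g)$. By the inclusions \eqref{eq:inclusion}, $W_{\eps_n}\Omega^-_n\subset\Omega(\delta_n\tau,\sigma_0)$ and $W_{\eps_n}\Omega(\delta_n\tau,\sigma_0)\subset\Omega^+_n$. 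This gives the sandwich
\[
\varphi_n^-\le\chi_{\Omega(\delta_n\tau,\sigma_0)}\le\varphi_n^+ .
\]
The functions $\varphi_n^\pm$ are smooth, take values in $[0,1]$, and satisfy $\|\varphi^\pm_n\|_{C^{q}}\ll\eps_n^{-\dim G-q}=|\bT_n|^{O(1)}$. They are not compactly supported, but have finite $C^q$-norm, so the extension noted after Theorem \ref{thm:equidistribution} applies.

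\emph{Equidistribution and volume.} By Theorem \ref{thm:equidistribution},
\[
\Big|\int\prod_i\varphi^\pm_n(a_{t_i}x)\,d\nu-\Big(\int\varphi^\pm_n\,d\mu\Big)^r\Big|\le C_re^{-\beta_r\widetilde D_r(\bT_n)}|\bT_n|^{O_r(1)}.
\]
Multiplying by $|\bT_n|^r$, the error tends to $0$, because $\widetilde D_r(\bT_n)/\log|\bT_n|\to\infty$. This is the step I expect to be the crux: the polynomial loss in the Sobolev norms must be beaten by the super-polynomial decay coming from sparseness, and the choice $\eps_n=\delta_n^2$ keeps the loss polynomial. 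For the main term, $\int\varphi_n^\pm\,d\mu=\mu(\Omega^\pm_n)$, since $\mu$ is $G$-invariant. Since $\mathcal C^\pm_n$ has parameters eventually below $\theta_1$ (here $\sigma_0<\theta_1$ is used), Lemma \ref{lem:volume Omega} gives $\mu(\Omega_n^\pm)=C_0\mu_\cV(\mathcal C_n^\pm(\tau))$. Lemma \ref{lem:volume stability} then gives $|\bT_n|\mu(\Omega_n^\pm)=\delta_n^{-d/2}\mu(\Omega_n^\pm)\to C_0C_d(\sigma_0\tau)^{d/2}=b$. Hence $|\bT_n|^r(\int\varphi_n^\pm\,d\mu)^r\to b^r$.

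\emph{Conclusion.} By the sandwich, the upper and lower bounds for $|\bT_n|^r\nu(\bigcap_iA_{n,t_i})$ both converge to $b^r$. All the bounds depend on the $t_i$ only through $\widetilde D_r(\bT_n)$, so the convergence is uniform and $\Delta_{n,r}(b)\to0$. Proposition \ref{prop:poisson criterion} then yields the Poisson limit.
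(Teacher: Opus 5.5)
Your proposal is correct and follows the paper's proof step for step. It uses the same smoothing $\varphi_n^\pm=\rho_{\eps_n}*\chi_{\Omega_n^\pm(\tau)}$ with $\eps_n=\delta_n^2$, and the same application of Theorem \ref{thm:equidistribution}, where the polynomial Sobolev loss is beaten by $\widetilde D_r(\bT_n)/\log|\bT_n|\to\infty$. It also uses the same volume identities from Lemmas \ref{lem:volume Omega} and \ref{lem:volume stability}, giving $m=C_0C_d\sigma_0^{d/2}$, and the same appeal to Proposition \ref{prop:poisson criterion}.

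One small correction to your commentary: any power of $\delta_n$ would keep the loss polynomial. The reason for $\eps_n=\delta_n^2$ rather than $\eps_n=\delta_n$ is that $c\eps_n=o(\delta_n)$, which is what makes both $\delta_n^{-d/2}\mu_{\cV}(\mathcal{C}_n^\pm(\tau))$ converge to the same limit.
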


\begin{proof}
    Fix $\tau>0$, set
$$
\delta_n:=|\bT_n|^{-2/d}\qand \eps_n:=\delta_n^2,
$$
and consider the hitting sets in $\mathcal{X}$ defined similarly to \eqref{def Omega} by
\begin{align*}
\Omega_n(\tau)&:= \{ \Lambda\in \mathcal{X}: \Lambda\cap \mathcal{C}_n(\tau)\neq \emptyset\},\\
\Omega_n^\pm(\tau)&:= \{ \Lambda\in \mathcal{X}: \Lambda\cap \mathcal{C}_n^{\pm}(\tau)\neq \emptyset\},
\end{align*}
where $\mathcal{C}_n(\tau)=\mathcal{C}(\delta_n \tau,\sigma_0)$ and $\mathcal{C}_n^\pm(\tau)$ are as in Lemma \ref{lem:volume stability}.

We take a family of non-negative smooth functions 
${\rho}_\eps$ on $G$
such that 
$$
\supp ({\rho}_\eps) \subset W_\eps,\quad\quad
\int_G {\rho}_\eps\,  d\mu_G = 1,\quad\quad
\|{\rho}_\eps\|_{C^q} \ll_q \eps^{-\gamma_q}
$$
for some $\gamma_q>0$. We set
$$
\varphi_n^- := \rho_{\eps_n} * \chi_{\Omega_n^-(\tau)}
\quad\hbox{and}\quad
\varphi_n^{+} := \rho_{\eps_n} * \chi_{\Omega_n^+(\tau)}.
$$
These are smooth functions on $\cX$ with finite $C^q$-norms, and the inclusions \eqref{eq:inclusion}
imply that
\[
\varphi_n^{-} \leq \chi_{\Omega_n(\tau)} \leq \varphi_n^{+},
\]
so that for  $\bT\subset \bT_n$
with $|\bT|=r$,
$$\int_\mathcal{X}\left( \prod_{t \in \bT} \varphi_n^{-} \circ a_t\right) \,  d\nu
\leq
\nu\left(\bigcap_{t \in \bT} a_t^{-1}\Omega_n(\tau)\right)
\leq \int_\mathcal{X} \left(\prod_{t \in \bT} \varphi_n^{+} \circ a_t\right) \,  d\nu.
$$
Furthermore,  since $\mu$ is $G$-invariant and $\int_G \rho_{\eps_n} \,  d\mu_G = 1$,  we have
$$\int_\mathcal{X} \varphi^{+}_n \,  d\mu = \mu(\Omega_n^{+}(\tau)) \qand \int_\mathcal{X} \varphi_n^{-} \,  d\mu = \mu(\Omega_n^{-}(\tau)). 
$$
With $q:=q_r$ as in Theorem \ref{thm:equidistribution} and $\eps_n=|\bT_n|^{-4/d}$,
$$\|\rho_{\eps_n}\|_{C^{q}(G)} \ll_q |\bT_n|^{\frac{4\gamma_{q}}{d}},
\quad\hbox{and hence}\;\;
\|\varphi^\pm_n\|_{C^{q}(\cX)} \ll_q |\bT_n|^{\frac{4\gamma_{q}}{d}}.
$$
Applying Theorem  \ref{thm:equidistribution}, we conclude that 
$$
\int_\mathcal{X} \left(\prod_{t\in \bT} \varphi_n^{\pm} \circ a_t\right) \,  d\nu= \mu(\Omega_n^\pm(\tau))^r+O_r\left(|\bT_n|^{\frac{4r\gamma_{q}}{d}} \cdot e^{-\beta_r \widetilde D_r(\bT_n)}\right),
$$
and
$$
|\bT_n|^r \nu\Big(\bigcap_{t \in \bT} a_t^{-1}\Omega_n(\tau)\Big)
\le \big(|\bT_n| \mu(\Omega_n^{+}(\tau))\big)^r  +O_r\left(|\bT_n|^{r+\frac{4r\gamma_{q}}{d}} \cdot e^{-\beta_r \widetilde D_r(\bT_n)}\right).
$$
This estimate is uniform over subsets $\bT\subset\bT_n$ with $|\bT|=r$.
One also obtains a similar lower bound in terms of
$\Omega_n^{-}(\tau)$.
We note that due to \eqref{eq:sparse}, the second term in these estimates converges to zero.
Furthermore, since $\delta_n\tau\pm c\eps_n\ra 0$ and
$\sigma_0\pm c\eps_n\ra\sigma_0<\theta_1$, Lemma \ref{lem:volume Omega} applies for all $n$
large enough and gives
$$
\mu(\Omega_n^{\pm}(\tau))= C_0\mu_{\mathcal{V}}(\mathcal{C}_n^{\pm}(\tau)),
$$
and by Lemma \ref{lem:volume stability},
$$
|\bT_n| \mu_{\mathcal{V}}(\mathcal{C}_n^\pm(\tau))
\to 
C_d\sigma_0^{d/2}\tau^{d/2}.
$$
Therefore, for every $r \geq 1$,
\[
\lim_{n \ra \infty} \max\Big\{ \Big| |\bT_n|^r \nu\Big(\bigcap_{t \in \bT} a_t^{-1}\Omega_n(\tau)\Big) - (m\tau^{d/2})^r \Big| \,  : \,  \bT \subset \bT_n,  \enskip |\bT| = r \Big\} = 0,
\]
where $m:=C_0C_d\sigma_0^{d/2}$.
We deduce from Proposition \ref{prop:poisson criterion}, applied with $\mathcal{Z}=\cX$,
$F_n=\bT_n$ and $A_{n,t}=a_t^{-1}\Omega_n(\tau)$, that
$$\mathcal{N}_{\bT_n}(\cdot,\delta_n\tau)\xLongrightarrow[\nu]{} \Poi(m\tau^{d/2}),
$$
which is the first assertion of the theorem. The second follows from it, since
$\{\widetilde{\eta}_{\bT_n}<\tau\}=\{\mathcal{N}_{\bT_n}(\cdot,\tau)>0\}$.
\end{proof}

\begin{proof}[Proof of Theorems \ref{th:weibull} and \ref{th:poisson}]
Set $m_d:=m\sigma_0^{-d/2}$, with $m$ as in Theorem \ref{thm:poisson N}. Applying Lemma
\ref{lem:poisson tilde kt tilde etaT} to the first assertion of
Theorem \ref{thm:poisson N}, we obtain
$$ \mathcal{K}_{\bT_n}(\cdot,|\bT_n|^{-1/d} \tau)\xLongrightarrow[\mu_{_{S^d}}]{} \Poi(m_d\tau^{d}),
$$
which is Theorem \ref{th:poisson}; and applying Lemma
\ref{lem:tilde kt tilde etaT} to the second assertion, with $\lambda=m^{-2/d}$, we obtain
Theorem \ref{th:weibull}, which also follows directly from Theorem \ref{th:poisson},
as explained in the introduction.
\end{proof}

\bibliographystyle{amsplain}
\bibliography{bibliography}

@article{alam2020quantitative,
  author  = {Alam, Mahbub and Ghosh, Anish},
  title   = {Quantitative rational approximation on spheres},
  journal = {Selecta Math. (N.S.)},
  volume  = {28},
  year    = {2022},
  number  = {5},
  pages   = {Paper No. 86, 25},
}

@article{WeibullBG,
  author  = {Bj{\"o}rklund, Michael and Gorodnik, Alexander},
  title   = {Poisson approximation and {W}eibull asymptotics in the geometry of numbers},
  journal = {Trans. Amer. Math. Soc.},
  volume  = {376},
  year    = {2023},
  number  = {3},
  pages   = {2155--2180},
}

@article{dani1985,
  author  = {Dani, S. G.},
  title   = {Divergent trajectories of flows on homogeneous spaces and {D}iophantine approximation},
  journal = {J. Reine Angew. Math.},
  volume  = {359},
  year    = {1985},
  pages   = {55--89},
}

@article{Doeblin1940,
  author  = {Doeblin, Wolfgang},
  title   = {Remarques sur la th{\'e}orie m{\'e}trique des fractions continues},
  journal = {Compositio Math.},
  volume  = {7},
  year    = {1940},
  pages   = {353--371},
}

@article{dolgopyat2021multipleborelcantellilemma,
  author  = {Dolgopyat, Dmitry and Fayad, Bassam and Liu, Sixu},
  title   = {Multiple {B}orel--{C}antelli lemma in dynamics and {M}ulti{L}og law for recurrence},
  journal = {J. Mod. Dyn.},
  volume  = {18},
  year    = {2022},
  pages   = {209--289},
}

@article{drutu2005,
  author  = {Dru{\c{t}}u, Cornelia},
  title   = {Diophantine approximation on rational quadrics},
  journal = {Math. Ann.},
  volume  = {333},
  year    = {2005},
  number  = {2},
  pages   = {405--469},
}

@article{GarlandRaghunathan,
  author  = {Garland, Howard and Raghunathan, M. S.},
  title   = {Fundamental domains for lattices in ({$\mathbb{R}$}-)rank 1 semisimple {L}ie groups},
  journal = {Ann. of Math. (2)},
  volume  = {92},
  year    = {1970},
  pages   = {279--326},
}

@article{Hirata_1993,
  author  = {Hirata, Masaki},
  title   = {Poisson law for {A}xiom {A} diffeomorphisms},
  journal = {Ergodic Theory Dynam. Systems},
  volume  = {13},
  year    = {1993},
  number  = {3},
  pages   = {533--556},
}

@article{KelmerYu2023,
  author  = {Kelmer, Dubi and Yu, Shucheng},
  title   = {Second moment of the light-cone {S}iegel transform and applications},
  journal = {Adv. Math.},
  volume  = {432},
  year    = {2023},
  pages   = {Paper No. 109270, 70},
}

@article{kleinbockmargulis1998,
  author  = {Kleinbock, D. Y. and Margulis, G. A.},
  title   = {Flows on homogeneous spaces and {D}iophantine approximation on manifolds},
  journal = {Ann. of Math. (2)},
  volume  = {148},
  year    = {1998},
  number  = {1},
  pages   = {339--360},
}

@article{Kleinbock_1999,
  author  = {Kleinbock, D. Y. and Margulis, G. A.},
  title   = {Logarithm laws for flows on homogeneous spaces},
  journal = {Invent. Math.},
  volume  = {138},
  year    = {1999},
  number  = {3},
  pages   = {451--494},
}

@article{kleinbock2013rational,
  author  = {Kleinbock, Dmitry and Merrill, Keith},
  title   = {Rational approximation on spheres},
  journal = {Israel J. Math.},
  volume  = {209},
  year    = {2015},
  number  = {1},
  pages   = {293--322},
}

@article{ouaggag2023effective,
  author  = {Ouaggag, Zouhair},
  title   = {Effective rational approximation on spheres},
  journal = {J. Number Theory},
  volume  = {249},
  year    = {2023},
  pages   = {183--208},
}

@misc{ouaggag2024clt,
  author = {Ouaggag, Zouhair},
  title  = {Effective estimate and {C}entral {L}imit {T}heorem for {D}iophantine approximation on spheres},
  year   = {2024},
  note   = {Preprint, arXiv:2409.02970},
}

@article{Pitskel1991,
  author  = {Pitskel, B.},
  title   = {Poisson limit law for {M}arkov chains},
  journal = {Ergodic Theory Dynam. Systems},
  volume  = {11},
  year    = {1991},
  number  = {3},
  pages   = {501--513},
}

@incollection{Pollicott2009,
  author    = {Pollicott, Mark},
  title     = {Limiting distributions for geodesics excursions on the modular surface},
  booktitle = {Spectral analysis in geometry and number theory},
  series    = {Contemp. Math.},
  volume    = {484},
  publisher = {Amer. Math. Soc.},
  address   = {Providence, RI},
  year      = {2009},
  pages     = {177--185},
}

@article{Sullivan1982,
  author  = {Sullivan, Dennis},
  title   = {Disjoint spheres, approximation by imaginary quadratic numbers, and the logarithm law for geodesics},
  journal = {Acta Math.},
  volume  = {149},
  year    = {1982},
  number  = {3-4},
  pages   = {215--237},
}

\end{document}